% ----------------------------------------------------------------
% AMS-LaTeX Paper ************************************************
% **** -----------------------------------------------------------
\documentclass[11pt]{amsart}
\usepackage{amsmath}
\usepackage{graphicx}
\usepackage{epstopdf}
\usepackage{amssymb,amscd,array}
\usepackage{color}
\usepackage{float}

\textheight 21cm \oddsidemargin 1.5cm
 \evensidemargin 1.5cm
\topskip     0.5cm \footskip    0.5cm \headheight   0.5cm \topmargin
0.25cm

\setlength\paperheight {297mm} \setlength\paperwidth {210mm}

%\bigodot

\makeindex \makeatletter
\def\captionof#1#2{{\def\@captype{#1}#2}}
\makeatother

%======================================================================
\newcounter{tablegroup}
\newcounter{subtable}[tablegroup]

%\includeonly{chap001}
%\usepackage[fran�ais]{babel}
\setcounter{secnumdepth}{4} \setcounter{tocdepth}{4}
%----------------------------------------------------------------------

% ----------------------------------------------------------------
\vfuzz2pt % Don't report over-full v-boxes if over-edge is small
\hfuzz2pt % Don't report over-full h-boxes if over-edge is small
% THEOREMS -------------------------------------------------------
\newtheorem{thm}{Theorem}[section]
\newtheorem{cor}[thm]{Corollary}
\newtheorem{lem}[thm]{Lemma}
\newtheorem{prop}[thm]{Proposition}
\newtheorem{defn}[thm]{Definition}

\numberwithin{equation}{section}
% MATH -----------------------------------------------------------

\newcommand{\eps}{\varepsilon}

% ----------------------------------------------------------------

\begin{document}
\title[Minimal sets and orbit space]
{Minimal sets and orbit space for group actions on local dendrites}

\author{ Habib Marzougui and Issam Naghmouchi}

\address{ Habib Marzougui, University of Carthage, Faculty
of Sciences of Bizerte, Department of Mathematics,
Jarzouna, 7021, Tunisia.}
\email{hmarzoug@ictp.it and habib.marzougui@fsb.rnu.tn}
\address{ Issam Naghmouchi, University of Carthage, Faculty
of Sciences of Bizerte, Department of Mathematics,
Jarzouna, 7021, Tunisia.}
 \email{issam.nagh@gmail.com and issam.naghmouchi@fsb.rnu.tn}

\subjclass[2000]{ 37B05, 37B45, 37E99}

\keywords{graph, dendrite, local dendrite, group action, minimal set, almost periodic point,
closed relation orbit, orbits space.}

\begin{abstract}
 We consider a group $G$ acting on a local dendrite $X$ (in particular on a graph). We give a
 full characterization of minimal sets of $G$ by showing that any minimal set $M$ of $G$ (whenever $X$ is different from a dendrite)
 is either a finite orbit, or a Cantor set, or a circle.
If $X$ is a graph different from a circle, such a minimal $M$ is a finite orbit.
These results extend those of the authors for group actions on
dendrites. On the other hand, we show that, for any group $G$ acting
on a local dendrite $X$ different from a circle, the following properties are equivalent:  (1) ($G, X$) is
pointwise almost periodic. (2) The orbit closure relation $R = \{(x, y)\in X\times X: y\in \overline{G(x)}\}$ is
 closed. (3) Every non-endpoint of $X$ is periodic. In addition, if $G$ is countable and $X$ is a local dendrite, then
 ($G, X$) is pointwise periodic if and only if the orbit space $X/G$
 is Hausdorff.
\end{abstract}
\maketitle

\section{\bf Introduction}
Let $X$ be a compact metric space with a metric $d$ and $G$ be a discrete group.
By an action of $G$ on $X$ we mean a continuous map $\varphi: G\times X\longrightarrow X$ satisfying
$\varphi(e,x)= x$ and $\varphi(g_{1}g_{2},x) = \varphi\left(g_{1}, \varphi(g_{2},x)\right)$ for all $x\in X$,
and all
$g_{1}, g_{2}\in G$, where $e$ is the identity of $G$. For convenience we often use $g(x)$ to denote $\varphi(g,x)$. Obviously for each $g\in G$, the map $g: X\longrightarrow X; x\longmapsto g(x)$ is a homeomorphism of $X$.
For any $x\in X$, the subset $G(x)= \{
g(x): g\in G\}$ is called the \textit{orbit} of $x$ under $G$. A subset $A$ of $X$ is called \textit{$G$-invariant} if
$g(A) = A$, for every $g\in G$. It is called \textit{a minimal set of $G$} if it is
non-empty, closed, $G$-invariant and minimal (in the sense of
inclusion) for these properties, this is equivalent to say that it is an orbit closure that contains no smaller one;
for example a single finite
orbit. When $X$ itself is a minimal set, then we say that the action of $G$ on $X$ is \textit{minimal}.
One of the objectives of the theory of dynamical systems has been to characterize the topological structure of
minimal sets. Clearly, the answer depends on $X$. Let us first recall that every
group action on a compact metric space admits a minimal set, as results from Zorn's lemma.
Among one-dimensional compact spaces, the characterization of minimal sets is well known on the compact interval;
these are finite orbits (see Lemma \ref{l31}). For the circle, there are three possibilities for minimal sets
(see Corollary \ref{c33}).

Recent interest in dynamics on dendrites and local dendrites is motivated by the fact that local dendrites are examples of Peano continua
with complex topology structures (e.g., \cite{Nadler},
pp. 165--187). For continuous maps on dendrites and local dendrites,
a full topological characterization of minimal sets was given by Balibrea et al. in \cite{Baln}.

For groups actions on graphs and dendrites, several results related to minimality,
sensitivity and existence of global fixed points
have been obtained by some authors (see e.g.,  \cite{Sh1}, \cite{Ma3}, \cite{Sh2}, \cite{Sh3}). For
rigidity results for actions on dendrites, see
\cite{mon}.
Existence of minimal group actions on dendrites can occur in the study of $3$-hyperbolic geometry (see \cite{Ra},
p. 601). These facts, among others, motivate us to explore the topological dynamic of minimal sets for group actions on
local dendrites. In \cite{MN1}, the authors studied minimal sets for group actions on dendrites.
This paper is, in part, a continuation of that work;
 we will study minimal sets for group actions on local dendrites. Graphs and dendrites
are particular cases of local dendrites.
 Our main results are a full characterization of minimal sets on graphs
and local dendrites different from a dendrite (Theorems \ref{tr1} and \ref{tr37}).
On the other hand, for a group $G$ acting
on a local dendrite $X$ different from a circle, we show (see Theorem \ref{t56}) that the following properties are equivalent: (1) ($G, X$) is
pointwise almost periodic. (2) The orbit closure relation $R = \{(x, y)\in X\times X: y\in \overline{G(x)}\}$ is
closed. (3) Every non-endpoint of $X$ is periodic.
\medskip

The plan of the paper is as follows. In Section 2, we give some definitions and preliminary properties
on graphs, dendrites and local dendrites which are useful for the rest of the paper.
Section 3 is devoted to minimal sets for groups actions on local dendrites. In particular we prove
that any minimal set for group actions on graphs different from a circle is a finite orbit.
In Section 4, we deal with  the family of minimal sets considered in the hyperspace of closed subsets endowed with the Hausdorff metric. Section 5 is devoted to the
relation between almost periodicity, closure orbit relation, and the orbit (class) space % and equicontinuity
for groups actions on local dendrites.
\medskip

\section{\bf Preliminaries}
In this section, we recall some basic properties of graphs, dendrites and local dendrites.

A continuum is a compact connected metric space. An arc is
any space homeomorphic to the compact interval $[0,1]$. A topological space
is arcwise connected if any two of its points can be joined by an
arc. We use the terminologies from Nadler \cite{Nadler}.
\medskip

By a \textit{graph} $X$, we mean a continuum which can be written as the union of finitely many arcs
such that any two of them are either
disjoint or intersect only in one or both of their endpoints. Each of these arcs is called an \textit{edge} of the graph. A point $v\in G$
is called a \textit{branch point} if it admits a neighborhood $U$ in $X$
homeomorphic to the set $\{z\in \mathbb{C}: z^{r}\in [0,1]\}$ with
the natural topology for some integer $r\geq 3$, with the
homeomorphism mapping $v$ to $0$. If $r=1$, then we call $v$ an
\textit{endpoint} of $X$. Denote by $B(X)$ and $E(X)$ the sets of
branch points and endpoints of $X$ respectively. An edge is the
closure of some connected component of $X\setminus B(X)$, it is
homeomorphic to $[0,1]$. A subgraph of $X$ is a subset of $X$ which
is a graph itself. Every sub-continuum of a graph is a graph
(\cite{Nadler}, Corollary 9.10.1).
 Denote by $S^{1}=[0,1]_{\mid 0\sim 1}$ the unit circle endowed with the
orientation: the counter clockwise sense induced via the natural
projection $[0,1]\rightarrow S^{1}$. A circle is any space homeomorphic to $S^{1}$.
\medskip

By a \textit{dendrite} $D$, we mean a locally connected continuum
containing no homeomorphic copy to a circle. Every sub-continuum of a
dendrite is a dendrite (\cite{Nadler}, Theorem 10.10) and every
connected subset of $D$ is arcwise connected (\cite{Nadler},
Proposition 10.9). In addition, any two distinct points $x,y$ of a
dendrite $D$ can be joined by a unique arc with endpoints $x$ and
$y$, denote this arc by $[x,y]$ and let denote by
$[x,y)=[x,y]\setminus\{y\}$ (resp. $(x,y]=[x,y]\setminus\{x\}$ and
$(x,y)=[x,y]\setminus\{x,y\}$). A point $x\in D$ is called an
\textit{endpoint} if $D\setminus\{x\}$ is connected. It is called a
\textit{branch point} if $D\setminus \{x\}$ has more than two
connected components. The number of connected components of $D\setminus \{x\}$ is called the \textit{order} of $x$.
Denote by $E(D)$ and $B(D)$ the sets of
endpoints, and branch points of $D$, respectively. A point $x\in
D\setminus E(D)$ is called a \textit{cut point}. The set of cut
points of $D$ is dense in $D$.
Following (\cite{Ar}, Corollary 3.6), for any dendrite $D$, we have
B($D)$ is discrete whenever E($D)$ is closed. 
For a subset $A$ of $D$, we call \emph{the convex hull} of $A$, denoted by $[A]$, the intersection of all 
sub-continuums of $D$ containing $A$. If $A$ is a sub-dendrite of $D$, define the retraction $r_{A} : D \rightarrow A$
by letting $r_{A}(x) = x$, if $x\in A$,
and by letting $r_{A}(x)$ to be the unique point $r_{A}(x)\in A$ such that $r_{A}(x)$ is a point of any arc in $D$
from $x$ to any point of $A$, if $x\notin A$ (see \cite{Nadler}, 10.26, p. 176). Note that the map $r_{A}$ is monotone and it is constant on each
connected component of $D\backslash A$.
\medskip

By a \emph{local dendrite} $X$ we mean a continuum having the property that every of its points has a neighborhood
which is a dendrite. A local dendrite is then a locally connected continuum containing only a finite number of circles
(\cite{Kur}, Theorem 4). As a consequence every sub-continuum of a local dendrite is a local dendrite (\cite{Kur}).
Every graph and every dendrite is a local dendrite.
Let $X$ be a local dendrite. For any arc $I$ in $X$, we denote by $\gamma(I)$ the set of its endpoints.
A point $x\in X$ is called a \emph{branch point} of $X$ if there exists a closed neighborhood $D$
of $x$ which is a dendrite such that $x$ is a branch point of $D$ (i.e. $D\backslash \{x\}$ has more than two connected
components). We denoted by $B(X)$ the set of branch points of $X$.
By (\cite{Kur}, Theorem 6, 304 and Theorem 7, 302), $B(X)$ is at most countable.
\medskip

For a subset $A$ of $X$, denote by $\overline{A}$ the closure of $A$ and by $\textrm{diam}(A)$ the diameter of
$A$.
\medskip

\begin{lem}[\cite{IG}, Lemma 2.3]\label{l2} Let $X$ be a local dendrite,
$(C_{i})_{i\in\mathbb{N}}$ be a sequence of pairwise disjoint connected subsets of $X$.
Then $\underset{n\to +\infty}\lim \mathrm{diam}(C_{n})=0$.
\end{lem}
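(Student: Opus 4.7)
\medskip
\emph{Proof plan.} My strategy is to argue by contradiction, using a cut-point argument analogous to the one available for dendrites, and to deal separately with the finitely many circles of $X$.

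Suppose $\mathrm{diam}(C_n)\not\to 0$; then after passing to a subsequence we may assume there exists $\varepsilon>0$ with $\mathrm{diam}(C_n)\ge\varepsilon$ for every $n$. I would choose $x_n,y_n\in C_n$ with $d(x_n,y_n)\ge\varepsilon/2$ and, by compactness of $X$, extract further subsequences so that $x_n\to x$ and $y_n\to y$ with $d(x,y)\ge\varepsilon/2$. The plan is then to produce a single point $p\in X$ which is a cut point of $X$ separating $x$ from $y$: once such a $p$ is in hand, for all large $n$ the points $x_n$ and $y_n$ lie in distinct components of $X\setminus\{p\}$, and the connectedness of $C_n$ forces $p\in C_n$; this contradicts the pairwise disjointness of the $C_n$.

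To construct $p$, I would invoke the Kuratowski characterization already recalled in the preliminaries: $X$ contains only finitely many circles $S_1,\dots,S_k$. Collapsing each $S_i$ to a single point yields a monotone quotient map $\pi\colon X\to X'$ onto a dendrite $X'$. Provided $\pi(x)\neq\pi(y)$, the unique arc $[\pi(x),\pi(y)]\subset X'$ has uncountably many interior points, every one of which is a cut point of $X'$; avoiding the $k$ collapsed-circle points, I can pick an interior cut point $p'\in X'\setminus\pi(\bigcup_iS_i)$. Its unique preimage $p\in X\setminus\bigcup_iS_i$ is then a cut point of $X$, because monotonicity of $\pi$ means $X\setminus\{p\}$ maps onto the disconnected $X'\setminus\{p'\}$, and it separates $x$ from $y$ as required.

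The main technical obstacle is the residual case $\pi(x)=\pi(y)$, in which $x$ and $y$ lie on a common circle $S_{i_0}$ and no cut point of $X$ separates them along that circle. To finish the proof I would extract a further subsequence so that the closures $\overline{C_n}$ Hausdorff-converge to a sub-continuum $K\subset X$ containing both $x$ and $y$. If $K\not\subset S_{i_0}$ then $K$ contains an arc leaving $S_{i_0}$, and I can relocate the roles of $x,y$ to two points of $K$ separated by a non-circle cut point of $X$, so that the previous cut-point argument still applies. If on the other hand $K\subset S_{i_0}$, then for all large $n$ the closures $\overline{C_n}$ lie in an arbitrarily small neighborhood of $S_{i_0}$, which is essentially a finite subgraph of $X$; an elementary counting argument shows that in a finite graph only finitely many pairwise disjoint connected subsets can have diameter at least $\varepsilon$, yielding the final contradiction.
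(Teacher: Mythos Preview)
The paper does not actually prove this lemma: it is quoted verbatim from \cite{IG} (Askri--Naghmouchi), so there is no in-paper argument to compare against. I can only assess your proposal on its own merits.

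Your main case is sound. Once you have a point $p\in X\setminus\bigcup_i S_i$ whose image $p'=\pi(p)$ separates $\pi(x)$ from $\pi(y)$ in the quotient dendrite, the monotonicity of $\pi$ does force $p$ to be a cut point of $X$ separating $x$ from $y$, and the usual ``$p\in C_n$ for all large $n$'' contradiction goes through. One small caveat: collapsing the circles \emph{individually} is awkward when they meet or share arcs; it is cleaner (and matches what the paper does elsewhere) to collapse the minimal invariant graph $Y$ containing all circles to a single point, which certainly yields a dendrite.

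The genuine weakness is your residual case $\pi(x)=\pi(y)$. Two steps are not justified. First, when $K\not\subset S_{i_0}$ you assert you can ``relocate $x,y$ to two points of $K$ separated by a non-circle cut point of $X$''; but $K$ may well be contained in the graph $Y$ (for instance span two adjacent circles), in which case no point of $K$ lies off the circles and it is not clear such a separating cut point exists. Second, and more seriously, when $K\subset S_{i_0}$ you claim that a small neighborhood of $S_{i_0}$ is ``essentially a finite subgraph of $X$''. This is false in general: a local dendrite can have infinitely many dendritic hairs attached along $S_{i_0}$, so every neighborhood of $S_{i_0}$ may fail to be a graph. Your counting argument for graphs is correct, but you have not reduced to a graph. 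A workable repair is to argue that, once $\overline{C_n}$ lies in a thin tube around $S_{i_0}$ and the attached components of $X\setminus S_{i_0}$ meeting that tube all have diameter $<\varepsilon/4$ (finitely many exceptional large ones can be handled separately), each $C_n$ must actually meet $S_{i_0}$ in a connected set of diameter bounded below, and then the graph counting applies to $C_n\cap S_{i_0}$. Alternatively, and much more economically, you can bypass the entire residual analysis by invoking the classical fact that a continuum is hereditarily locally connected if and only if every sequence of pairwise disjoint subcontinua is null; local dendrites are hereditarily locally connected since every subcontinuum of a local dendrite is again a local dendrite.
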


\begin{lem}\label{oc} Let $X$ be a dendrite and let $U$ and $V$ be two disjoint non-empty connected subsets of $X$.
Then $\overline{U}\cap \overline{V}$ contains at most one point.
\end{lem}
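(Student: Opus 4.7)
\medskip

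\noindent\textbf{Proof plan.} The plan is to argue by contradiction, using the cut-point structure of a dendrite. Suppose $\overline{U}\cap\overline{V}$ contains two distinct points $a$ and $b$. In the dendrite $X$ there is a unique arc joining them, namely $[a,b]$, and we pick any interior point $c\in(a,b)$. The goal is to show that $c$ must lie in both $U$ and $V$, violating $U\cap V=\emptyset$.

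The key tool is that every interior point of an arc in a dendrite is a cut point: $X\setminus\{c\}$ splits into connected components, and $a$ and $b$ lie in distinct components, call them $A$ (containing $a$) and $B$ (containing $b$). Moreover, $c$ belongs to the closure of each such component, so $\overline{A}=A\cup\{c\}$ and $\overline{B}=B\cup\{c\}$ in $X$; in particular, $b\notin A\cup\{c\}$ and $a\notin B\cup\{c\}$.

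Now I would argue: if $c\notin U$, then $U$ is a connected subset of $X\setminus\{c\}$ and therefore is contained in a single component of $X\setminus\{c\}$, either $A$ or $B$. In either case $\overline{U}$ would be contained in $A\cup\{c\}$ or $B\cup\{c\}$, and this contradicts the fact that $\overline{U}$ contains both $a$ and $b$. Hence $c\in U$, and by the symmetric argument $c\in V$, contradicting the disjointness hypothesis.

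The only slightly delicate point is verifying the cut-point claim that $a$ and $b$ lie in different components of $X\setminus\{c\}$ and that the closure of each component is obtained simply by adjoining $c$; both facts are standard in the theory of dendrites (they follow from the uniqueness of arcs and the characterization of cut points in dendrites, as in Nadler), so I would invoke them rather than reprove them. Once this is in place the argument is immediate and does not require any additional machinery.
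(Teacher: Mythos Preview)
Your argument is correct. Note, however, that the paper does not actually supply a proof of this lemma: it is stated in the preliminaries section as a basic fact about dendrites, with no proof or citation attached. So there is nothing to compare against; your proposal simply fills in what the authors left implicit. The reasoning you give---pick $c\in(a,b)$, use that $c$ separates $a$ from $b$ in $X$, and conclude that any connected set whose closure contains both $a$ and $b$ must pass through $c$---is the standard way to see this, and the facts you invoke (that $c$ is a cut point separating $a$ and $b$, and that each component of $X\setminus\{c\}$ has closure obtained by adjoining $c$) are indeed well-known consequences of the dendrite structure, following from local connectedness and the uniqueness of arcs.
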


\begin{lem}[\cite{am}, Lemma 4.3]\label{diam} Let $X$ be a local dendrite with metric $d$. Then for any $\eps>0$ there is $0<\delta<\eps$ such that if $d(x,y)<\delta$ then
$\mathrm{diam}([x,y])<\eps$.
\end{lem}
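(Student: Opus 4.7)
The approach is by contradiction, using compactness of $X$ together with the compactness of its hyperspace of closed subsets under the Hausdorff metric, and then invoking the local dendrite structure.

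Suppose the conclusion fails. Then there exist $\eps_0>0$ and sequences $(x_n),(y_n)$ in $X$ with $d(x_n,y_n)\to 0$ but $\mathrm{diam}([x_n,y_n])\geq\eps_0$ for every $n$. Passing to subsequences, we may assume $x_n\to p$, $y_n\to p$ for some $p\in X$, and $[x_n,y_n]\to K$ in the Hausdorff metric, where $K$ is a subcontinuum of $X$ containing $p$ with $\mathrm{diam}(K)\geq\eps_0$. Since $X$ is a local dendrite, I would then choose a dendrite neighborhood $D$ of $p$ with $\mathrm{diam}(D)<\eps_0/3$; as $\mathrm{diam}(K)\geq\eps_0$, $K\not\subset D$, so for $n$ large $x_n,y_n$ lie in $\mathrm{int}(D)$ while $[x_n,y_n]$ meets $X\setminus D$.

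Because $D$ is a dendrite, there is a unique arc $A_n\subset D$ joining $x_n$ to $y_n$, and the arc-convexity of sub-dendrites gives $\mathrm{diam}(A_n)\leq \mathrm{diam}(D)<\eps_0/3$. Under the natural interpretation of $[x,y]$ in a local dendrite as this canonical short arc inside a small dendrite neighborhood of a common point (extending the dendrite convention of the preliminaries), one has $[x_n,y_n]=A_n$ for $n$ large, which is the desired contradiction with $\mathrm{diam}([x_n,y_n])\geq\eps_0$.

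The main obstacle is the alternative case in which $[x_n,y_n]\neq A_n$: in a local dendrite two distinct arcs can join the same pair of points, and any such second arc together with $A_n$ contains a simple closed curve of $X$. To handle this, I would use the fact, cited in the preliminaries (Kuratowski), that $X$ contains only finitely many simple closed curves $C_1,\dots,C_m$. This yields a positive lower bound $\eta:=\tfrac12\min_i\mathrm{diam}(C_i)$ on the diameter of any arc traversing a non-trivial portion of some $C_i$. Shrinking $D$ so that it meets each $C_i$ only in a sub-arc of small diameter, any arc from $x_n$ to $y_n$ that leaves $D$ and returns must traverse more than half of some $C_i$; the geometry of this loop forces $d(x_n,y_n)$ to be bounded below by a positive constant depending only on $\eta$ and on the embedding of the $C_i$ in $X$, contradicting $d(x_n,y_n)\to 0$. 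This closes the case analysis and produces the required $\delta>0$.
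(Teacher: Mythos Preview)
The paper does not supply its own proof of this lemma; it simply cites it from \cite{am}. So there is no ``paper's proof'' to compare against, and your proposal must be assessed on its own.

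Your first paragraph (compactness to a limit point $p$, then a dendrite neighborhood $D$ of $p$ and the unique arc $A_n\subset D$) is the correct idea, and in fact this essentially finishes the proof once the meaning of $[x,y]$ in a local dendrite is fixed: in this paper (and in \cite{am}) the bracket notation inherits the dendrite convention, so for $x,y$ both in a dendrite neighborhood one \emph{defines} $[x,y]$ to be the unique arc joining them inside that neighborhood. With that reading, $[x_n,y_n]=A_n$ for large $n$ by definition, and the contradiction is immediate. Alternatively, the statement is just uniform local arcwise connectedness of a locally connected continuum, which is classical.

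Your ``alternative case'' argument, however, contains a genuine error. You claim that if an arc from $x_n$ to $y_n$ leaves $D$ and travels around part of some circle $C_i$, then ``the geometry of this loop forces $d(x_n,y_n)$ to be bounded below by a positive constant.'' This is false already for $X=S^1$: two points at distance $1/n$ on the circle are joined by a short arc of length $1/n$ and a long arc of length $2\pi-1/n$; the existence of the long arc imposes no lower bound on $d(x_n,y_n)$. More generally, in any local dendrite containing a circle, arbitrarily close points can be joined by an arc of diameter close to that of the circle. Hence the lemma is \emph{false} under the reading ``for every arc from $x$ to $y$,'' and your last paragraph cannot be repaired. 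The resolution is not to strengthen the geometric argument but to recognize that this case does not arise: $[x,y]$ is the canonical short arc, so the case split is unnecessary and your first argument already suffices.
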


\begin{lem}\cite{Nagh2}\label{l34} Let $D$ be a dendrite with countable set of endpoints. Then every sub-dendrite of $D$ has countable set of endpoints.
\end{lem}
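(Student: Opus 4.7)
The plan is to decompose $E(D') = (E(D') \cap E(D)) \cup (E(D') \setminus E(D))$ and bound each piece separately. The first piece is contained in $E(D)$, which is countable by hypothesis, so the task reduces to showing that $E(D') \setminus E(D)$ is countable. I will do this by injecting it into the family of connected components of $D \setminus D'$ and then proving, via Lemma \ref{l2}, that this family is at most countable.

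Fix $y \in E(D') \setminus E(D)$. Because $y \notin E(D)$, the open set $D \setminus \{y\}$ has at least two connected components; because $y$ is an endpoint of $D'$, the set $D' \setminus \{y\}$ is connected and therefore fits inside a single component of $D \setminus \{y\}$. Hence some component of $D \setminus \{y\}$ is entirely disjoint from $D'$, and the component $C_y$ of $D \setminus D'$ containing it satisfies $y \in \overline{C_y}$. Applying Lemma \ref{oc} to the disjoint connected sets $C_y$ and $D' \setminus \{y\}$, the intersection $\overline{C_y} \cap \overline{D' \setminus \{y\}}$ contains at most one point; since $y$ belongs to both closures and $\overline{D' \setminus \{y\}} = D'$ (an endpoint of a non-degenerate dendrite is a limit point), this forces $\overline{C_y} \cap D' = \{y\}$. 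Consequently, the rule $y \mapsto C_y$ sends distinct values of $y$ to distinct components of $D \setminus D'$.

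It remains to show $D \setminus D'$ has at most countably many connected components. Any countably infinite subfamily would be a sequence of pairwise disjoint connected subsets of $D$, and Lemma \ref{l2} forces their diameters to tend to zero; in particular, for every $\varepsilon > 0$ only finitely many components have diameter exceeding $\varepsilon$. Writing the full family as $\bigcup_{n \geq 1}\{C : \mathrm{diam}(C) > 1/n\}$ expresses it as a countable union of finite sets (every component is open and nondegenerate, so of positive diameter), hence as a countable set. Combined with the injection $y \mapsto C_y$, this yields $|E(D') \setminus E(D)| \leq \aleph_0$, and therefore $E(D')$ is countable.

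The step I expect to be most delicate is the justification that $\overline{C_y} \cap D' = \{y\}$, which is what makes the map $y \mapsto C_y$ injective; once that is in place, the rest is a routine cardinality chase using Lemma \ref{l2} to count components. The argument uses the dendrite hypothesis essentially, via the uniqueness-of-arc property embedded in Lemma \ref{oc} and the connectedness of $D' \setminus \{y\}$ at an endpoint.
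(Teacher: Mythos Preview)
The paper does not supply its own proof of this lemma; it is quoted from \cite{Nagh2} without argument, so there is no in-paper proof to compare against. That said, your argument is correct. The decomposition $E(D')=(E(D')\cap E(D))\cup(E(D')\setminus E(D))$ and the injection of the second piece into the family of connected components of $D\setminus D'$ via $y\mapsto C_y$ are sound; the key identity $\overline{C_y}\cap D'=\{y\}$ follows cleanly from Lemma~\ref{oc} as you indicate (you are tacitly assuming $D'$ is nondegenerate, which is harmless since otherwise $E(D')$ is trivially countable). For the countability of the components of $D\setminus D'$, your use of Lemma~\ref{l2} works, though one could shortcut this step: the components are open by local connectedness of $D$ and pairwise disjoint in the second-countable space $D$, hence at most countable.
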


\begin{lem}\cite{Ar}\label{l35} Let $D$ be a dendrite with closed set of endpoints. Then we
have:
\begin{itemize}
 \item[(i)] $\overline{B(D)}\subset B(D)\cup E(D)$.

 \item[(ii)] Every sub-dendrite of $D$ has a closed set of endpoints.
\end{itemize}
\end{lem}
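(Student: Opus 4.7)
The plan is to argue (i) by contradiction, producing a sequence of endpoints of $D$ that converges to the supposed limit point and hence contradicts the closedness of $E(D)$, and then to deduce (ii) from (i) together with a structural analysis of the components of $D\setminus D'$.

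For (i), suppose toward a contradiction that $x\in\overline{B(D)}\setminus(B(D)\cup E(D))$. Then $x$ has order $2$ in $D$, so $D\setminus\{x\}=C_{1}\sqcup C_{2}$. Pick branch points $b_{n}\to x$ with $b_{n}\neq x$; after passing to a subsequence, assume all $b_{n}\in C_{1}$. Since $b_{n}$ has order $\ge 3$, at least two components of $D\setminus\{b_{n}\}$ are disjoint from $x$. The main technical step is to select, after a further thinning, one such component $W_{n}$ for each $n$ so that the $W_{n}$ become pairwise disjoint. This proceeds inductively: each $\overline{W_{j}}=W_{j}\cup\{b_{j}\}$ is closed and misses $x$, so by thinning we may assume $b_{n}$ eventually avoids $\bigcup_{j<n}\overline{W_{j}}$; the arcs $[x,b_{n_{j}}]$ share common initial segments in $C_{1}$, so for $b_{n}$ close enough to $x$ all previously chosen $W_{j}$ lie on a single side of $b_{n}$, and the two candidate branches at $b_{n}$ then yield at least one choice of $W_{n}$ disjoint from all the $W_{j}$. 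This disjointness argument is the main obstacle; once it is in hand, Lemma~\ref{l2} gives $\mathrm{diam}(W_{n})\to 0$.

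Each $\overline{W_{n}}$ is a sub-dendrite of $D$ with $b_{n}$ as an endpoint, so it admits another endpoint $e_{n}\in W_{n}$. A short check shows $e_{n}\in E(D)$: the decomposition $D\setminus\{e_{n}\}=(\overline{W_{n}}\setminus\{e_{n}\})\cup(D\setminus W_{n})$ is the union of two connected sets meeting at $b_{n}$, hence connected. Since $d(e_{n},x)\le \mathrm{diam}(\overline{W_{n}})+d(b_{n},x)\to 0$, we get $e_{n}\to x$, so closedness of $E(D)$ forces $x\in E(D)$, contradicting our assumption.

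For (ii), let $D'\subset D$ be a sub-dendrite. The open set $D\setminus D'$ decomposes into at most countably many components $(U_{\alpha})$, with $\overline{U_{\alpha}}=U_{\alpha}\cup\{p_{\alpha}\}$ for a unique attachment point $p_{\alpha}\in D'$, and $p_{\alpha}$ is an endpoint of the sub-dendrite $\overline{U_{\alpha}}$. Lemma~\ref{l2} gives $\mathrm{diam}(U_{\alpha})\to 0$ when the family is infinite, and the same reasoning used in (i) produces inside each $\overline{U_{\alpha}}$ an endpoint of $D$ distinct from $p_{\alpha}$. Two observations organize the rest: first, $E(D)\cap D'\subset E(D')$, because a point of order $1$ in $D$ has order at most $1$, hence exactly $1$, in any sub-dendrite containing it; second, every $e\in E(D')\setminus E(D)$ must coincide with some attachment point $p_{\alpha}$, since $D'\setminus\{e\}$ is connected while $D\setminus\{e\}$ is not, which forces a component of $D\setminus D'$ to be attached at $e$.

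Now take $e_{n}\in E(D')$ with $e_{n}\to y\in D'$. If infinitely many $e_{n}$ lie in $E(D)$, then $y\in\overline{E(D)}=E(D)$, and the first observation gives $y\in E(D')$. Otherwise, we may assume each $e_{n}$ is some attachment point $p_{\alpha_{n}}$; extracting a further subsequence so that the $U_{\alpha_{n}}$ are pairwise distinct, we obtain $\mathrm{diam}(U_{\alpha_{n}})\to 0$. Picking $e_{n}^{*}\in E(D)\cap U_{\alpha_{n}}$ yields $d(e_{n}^{*},e_{n})\le\mathrm{diam}(\overline{U_{\alpha_{n}}})\to 0$, so $e_{n}^{*}\to y$; closedness of $E(D)$ then gives $y\in E(D)\subset E(D')$, completing the proof that $E(D')$ is closed.
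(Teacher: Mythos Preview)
The paper does not supply its own proof of this lemma; it is quoted from \cite{Ar} and stated without argument. So there is no ``paper's proof'' to compare against, and your task is really to give a self-contained proof.

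Your argument is essentially correct, and the strategy is natural. A couple of points deserve sharpening. In (i), the phrase ``all previously chosen $W_j$ lie on a single side of $b_n$'' is not literally what you need or what is true: some of the $\overline{W_j}$ may sit in the $x$-component of $D\setminus\{b_{n_k}\}$ while others do not. The correct statement is that the $\overline{W_j}$ \emph{not} lying in the $x$-component of $D\setminus\{b_{n_k}\}$ must all lie in the \emph{same} non-$x$-component. This follows because if $\overline{W_j}$ is separated from $x$ by $b_{n_k}$, then $b_{n_k}\in[x,b_{n_j}]$, and for two such indices $j,j'$ the point $b_{n_k}$ lies in $[x,b_{n_j}]\cap[x,b_{n_{j'}}]=[x,c]$, so $b_{n_j},b_{n_{j'}}$ (hence $\overline{W_j},\overline{W_{j'}}$) are both reachable from $c$ without crossing $b_{n_k}$. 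Thus at most two components of $D\setminus\{b_{n_k}\}$ are occupied (the $x$-component and possibly one other), and since $b_{n_k}$ has order $\ge 3$, a free non-$x$-component exists for $W_k$. With this clarification your inductive selection goes through.

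In (ii), your use of (i) is only notional: what you actually reuse is the elementary observation that an endpoint of $\overline{U_\alpha}$ other than $p_\alpha$ is an endpoint of $D$, and your verification of this is fine. You should also make explicit the trivial subcase where the $e_n\in E(D')\setminus E(D)$ take only finitely many values (then $y$ equals one of them and lies in $E(D')$ immediately); only in the complementary case can one extract pairwise distinct $U_{\alpha_n}$ and invoke Lemma~\ref{l2}. With these minor additions the proof is complete.
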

\medskip

From (\cite{Ar}, Theorem 3.3 ), we deduce easily the following Lemma:
\medskip

\begin{lem}\label{l12} The order of every branch point of a dendrite with closed set of endpoints is finite.
\end{lem}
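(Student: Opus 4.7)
\medskip

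\noindent\textbf{Proof plan.} I would argue by contradiction: assume $v\in B(D)$ has infinite order, so the connected components of $D\setminus\{v\}$ form an infinite family $(C_n)_{n\in\mathbb{N}}$ of pairwise disjoint open sets in $D$. The strategy is to produce a sequence of endpoints of $D$ converging to $v$, which contradicts the closedness of $E(D)$ (since branch points and endpoints of a dendrite are disjoint).

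\medskip

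\noindent First, for each $n$ the set $\overline{C_n}=C_n\cup\{v\}$ is a sub-continuum of $D$, hence a sub-dendrite, and $v$ is an endpoint of $\overline{C_n}$ (because $\overline{C_n}\setminus\{v\}=C_n$ is connected). Since $\overline{C_n}$ is non-degenerate, it possesses at least one endpoint $e_n\neq v$. I would then show that $e_n\in E(D)$ as follows: write
\[
D\setminus\{e_n\}=\bigl(\overline{C_n}\setminus\{e_n\}\bigr)\cup\bigl(D\setminus C_n\bigr).
\]
The first piece is connected because $e_n$ is an endpoint of the sub-dendrite $\overline{C_n}$; the second equals $\{v\}\cup\bigcup_{m\neq n}C_m$, which is connected since every $C_m$ with $m\neq n$ accumulates at $v$ (the only boundary point of $C_n$ in $D$, cf.\ Lemma~\ref{oc}). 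As both pieces contain $v$, their union is connected, so $e_n\in E(D)$. Note that $e_n\in C_n$, so in particular $e_n\neq v$.

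\medskip

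\noindent Now the sequence of distinct components $(C_n)$ is pairwise disjoint and connected, so Lemma \ref{l2} yields $\mathrm{diam}(C_n)\to 0$. Since $v\in\overline{C_n}$ and $e_n\in C_n$, this forces $e_n\to v$. But $E(D)$ is closed by hypothesis, so $v\in E(D)$, contradicting $v\in B(D)$. Hence every branch point of $D$ has finite order.

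\medskip

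\noindent The main obstacle is the intermediate claim that an endpoint $e_n$ of the sub-dendrite $\overline{C_n}$ (different from $v$) must be an endpoint of the ambient dendrite $D$; the remainder of the argument is a direct combination of the shrinking-diameters lemma and the closedness of $E(D)$. Everything else is bookkeeping.
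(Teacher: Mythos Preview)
Your argument is correct. The paper itself does not give a self-contained proof of this lemma; it simply records that the statement ``is deduced easily from \cite[Theorem~3.3]{Ar}'' and leaves it at that. Your route is genuinely different: instead of invoking the structural result from \cite{Ar}, you argue directly by contradiction, manufacturing a sequence of endpoints $e_n\in C_n$ converging to the putative infinite-order branch point $v$ via the shrinking-diameters Lemma~\ref{l2}, and then using the closedness of $E(D)$ to force $v\in E(D)$. The intermediate step---that an endpoint $e_n\neq v$ of the sub-dendrite $\overline{C_n}=C_n\cup\{v\}$ is already an endpoint of $D$---is handled cleanly by your decomposition $D\setminus\{e_n\}=(\overline{C_n}\setminus\{e_n\})\cup(D\setminus C_n)$, both pieces connected and meeting at $v$. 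What your approach buys is independence from the external reference and a fully elementary proof using only tools already present in the preliminaries (Lemmas~\ref{l2} and~\ref{oc}); the paper's approach is shorter on the page but outsources the content to \cite{Ar}.
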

\medskip

The following Lemma is trivial (see also \cite{Nagh1}).

\begin{lem}\label{g1} Let $X$ be a local dendrite and $f: X\rightarrow X$ a homeomorphism. Then:
\begin{itemize}
 \item[(i)] $f(B(X))=B(X)$.
 \item[(ii)]  $f(E(X))=E(X)$.
\end{itemize}
\end{lem}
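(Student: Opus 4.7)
The plan is to use the fact that being a branch point and being an endpoint of a local dendrite are purely local topological properties of the pair $(X,x)$, and that any self-homeomorphism of $X$ automatically preserves such properties. For both parts I would prove only one inclusion, namely $f(B(X))\subseteq B(X)$ and $f(E(X))\subseteq E(X)$, and obtain the reverse inclusions for free by applying the same argument to the homeomorphism $f^{-1}$.

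For (i), suppose $x\in B(X)$. By the definition recalled in Section~2, there is a closed neighborhood $D$ of $x$ in $X$ which is a dendrite and in which $D\setminus\{x\}$ has more than two connected components. Since $f$ is a homeomorphism of $X$, $f(D)$ is a closed neighborhood of $f(x)$ homeomorphic to $D$; in particular it is a locally connected continuum containing no simple closed curve, hence again a dendrite. Moreover $f$ restricts to a homeomorphism $D\setminus\{x\}\to f(D)\setminus\{f(x)\}$, so $f(D)\setminus\{f(x)\}$ also has more than two connected components. Thus $f(x)$ is a branch point of $f(D)$, and hence of $X$.

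For (ii), I would use the analogous local characterization: $x\in E(X)$ iff some closed dendritic neighborhood $D$ of $x$ satisfies that $D\setminus\{x\}$ is connected. Transporting $D$ through $f$ exactly as in (i), the image $f(D)$ is a dendritic closed neighborhood of $f(x)$ with $f(D)\setminus\{f(x)\}=f(D\setminus\{x\})$ connected, so $f(x)\in E(X)$. The only point requiring any care---and presumably the reason the authors call the lemma trivial---is to ensure that the defining criteria for a branch point and an endpoint of a local dendrite are intrinsic, i.e.\ can be tested inside \emph{any} sufficiently small closed dendritic neighborhood of the point. Once this is observed, the lemma reduces to the tautology that homeomorphisms preserve local topological invariants.
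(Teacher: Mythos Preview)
The paper does not supply a proof of this lemma: it simply calls it ``trivial'' and refers to \cite{Nagh1}. Your argument is correct and is precisely the standard justification one would give, namely that the defining properties of branch points and endpoints are witnessed by a closed dendritic neighborhood, and such a witness is transported by any homeomorphism. Note that your last paragraph is slightly over-cautious: since the definition of $B(X)$ in Section~2 is existential (``there exists a closed neighborhood $D$\dots''), you never need the criterion to hold in \emph{every} small dendritic neighborhood---transporting the single witnessing $D$ through $f$ already suffices, so no ``intrinsicness'' verification is required.
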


\begin{lem}\label{l21} Let $X$ be a continuum, a group $G$ acting on $X$ and $M$ a minimal set of $G$. Then $M$ is
\begin{itemize}
 \item[(i)] a finite orbit, or
  \item[(ii)] $X$; in this case all orbits are dense, or
\item[(iii)] a $G$-invariant, compact perfect nowhere dense subset of $X$.
\end{itemize}
\end{lem}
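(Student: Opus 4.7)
The plan is to split the analysis into cases according to whether $M$, regarded as a topological subspace, has an isolated point, and if not, whether $M$ has non-empty interior in $X$. Throughout, I would use only two facts: that minimality gives $\overline{G(x)} = M$ for every $x\in M$, and that each $g\in G$ acts as a self-homeomorphism of $X$ carrying $M$ onto $M$, so it preserves any topologically invariant subset of $M$.

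First I would look at $I$, the set of points of $M$ that are isolated in $M$. Since homeomorphisms of $X$ preserving $M$ send isolated points of $M$ to isolated points of $M$, both $I$ and $M\setminus I$ are $G$-invariant; moreover $M\setminus I$ is closed in $M$. Minimality then forces $M\setminus I = \emptyset$ or $M\setminus I = M$. In the first case $M$ is a compact discrete space, hence finite; since any orbit $G(x)\subseteq M$ is already closed and dense in $M$, we get $M=G(x)$, which is case (i).

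If instead $M$ has no isolated point, $M$ is perfect, and I would pass to $\mathrm{int}_X(M)$. Because every $g\in G$ is a homeomorphism of $X$ with $g(M)=M$, it sends $\mathrm{int}_X(M)$ to an open subset of $M$, hence into $\mathrm{int}_X(M)$; applying the same to $g^{-1}$ shows $\mathrm{int}_X(M)$, and therefore its relative complement $\partial M = M\setminus \mathrm{int}_X(M)$, are $G$-invariant. Since $\partial M$ is closed, minimality gives $\partial M=\emptyset$ or $\partial M=M$. If $\partial M=\emptyset$, then $M$ is clopen in $X$, and connectedness of the continuum $X$ yields $M=X$; minimality then gives $\overline{G(x)}=X$ for every $x$, which is case (ii). If $\partial M=M$, then $M$ has empty interior in $X$, producing the perfect, compact, nowhere dense, $G$-invariant set of case (iii).

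There is essentially no obstacle beyond verifying the $G$-invariance of the candidate subsets $I$, $\mathrm{int}_X(M)$, and $\partial M$, which is immediate from $g$ being a self-homeomorphism of $X$ fixing $M$ setwise. The rest is a standard combination of minimality, connectedness of $X$, and compactness of $M$.
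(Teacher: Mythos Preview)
Your proof is correct and follows essentially the same approach as the paper: first rule out isolated points by exhibiting a proper closed $G$-invariant subset (you use the derived set $M\setminus I$, the paper uses $\overline{G(a)}\setminus G(a)$; these are equivalent moves), then in the perfect case apply minimality to the boundary $M\setminus\mathrm{int}_X(M)$ and use connectedness of $X$ to get $M=X$ when the interior is non-empty. The argument and its logical structure coincide with the paper's proof.
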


\begin{proof} If $M$ contains a point $a$ isolated in $M$, then so is $g(a)$ for any $g\in G$. Hence
$\overline{G(a)}\backslash G(a)\subset M$ is closed, $G$-invariant and then, by minimality of $M$, we have
$\overline{G(a)} = G(a)$. Thus $G(a)$ is a finite orbit and $M = G(a)$.
So assume that $M$ has no isolated point, i.e. $M$ is perfect. If $M$ is somewhere dense in $X$, i.e. its interior $\mathring{M}\neq \emptyset$, then
$M\backslash \mathring{M}$ is closed and $G$-invariant, then by minimality of $M$, we have $M =  \mathring{M}$ and hence $M= X$. This completes the proof.
 \end{proof}
\medskip
\medskip

\section{\bf Minimal sets on local dendrites}
\medskip

\textbf{3.1. The interval case} \begin{lem}[Interval case]\cite{MN1}\label{l31}
Let $G$ be a group acting on the closed interval $I$ and $M\subset I$ a minimal set of $G$. Then
$M$ is a finite orbit (in fact a single point or two points).
\end{lem}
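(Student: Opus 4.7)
The plan is to exploit the very rigid structure of homeomorphisms of a compact interval: every homeomorphism $g \colon I \to I$ is strictly monotone, hence either fixes or swaps the two endpoints of any compact subset $A \subset I$, mapping $\min A$ and $\max A$ to $\min g(A)$ and $\max g(A)$ in one of the two possible orders. I will apply this observation to the extreme points of the minimal set and invoke minimality.

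Concretely, let $a = \min M$ and $b = \max M$, both of which exist since $M$ is a non-empty closed subset of the compact interval $I$. I would first show that the two-point set $F = \{a,b\}$ is $G$-invariant. Fix $g \in G$; since $g$ is a homeomorphism of $I$, it is strictly monotone on $I$, and therefore on the compact subset $M$ as well. Consequently $g$ sends $\{a,b\} = \{\min M, \max M\}$ to $\{\min g(M), \max g(M)\}$. Because $M$ is $G$-invariant, $g(M) = M$, and so $g(\{a,b\}) = \{a,b\}$. Thus $F \subset M$ is a non-empty, closed, $G$-invariant set.

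By the minimality of $M$, I get $M = F = \{a,b\}$, so $M$ has at most two points. In either case $M$ is a single finite orbit: if $a = b$ then $M = \{a\}$ is a fixed point of $G$, and if $a \neq b$ then $M$ consists of either one orbit of size two (when some $g \in G$ swaps $a$ and $b$) or two fixed points; the latter is ruled out by minimality (each singleton would itself be a closed invariant set strictly contained in $M$), so $M$ is exactly the orbit $G(a) = \{a,b\}$.

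There is essentially no hard step here; the whole proof rests on the monotonicity of interval homeomorphisms, which forces the extremes of any invariant compact set to form an invariant set of size at most two. The only thing to double-check is the case analysis at the end to conclude that $M$ is literally one orbit (and not a disjoint union of two fixed points), which follows immediately from the minimality hypothesis.
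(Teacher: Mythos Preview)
Your proof is correct. The paper itself does not supply a proof of this lemma; it merely quotes the result from the authors' earlier work \cite{MN1}, so there is no in-paper argument to compare against. Your approach via the extreme points $a=\min M$ and $b=\max M$, using that every self-homeomorphism of a compact interval is strictly monotone and hence permutes $\{a,b\}$, is the standard elementary argument and is entirely sound, including the final observation that minimality rules out the case of two distinct fixed points.
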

\medskip

\textbf{3.2. The circle case.} Let $\textrm{Homeo}(S^{1})$ (resp. $\textrm{Homeo}^{+}(S^{1})$)
be the group of homeomorphisms (resp. orientation preserving homeomorphisms) of $S^{1}$.
\medskip

\begin{prop}[Circle case]\label{p32} \cite{Bek}
Let $G$ be a subgroup of $\mathrm{Homeo}^{+}(S^{1})$ and $M\subset S^{1}$ a minimal set of $G$. Then $M$ is
\begin{itemize}
 \item[(i)] a finite orbit, or
  \item[(ii)] $S^{1}$; in this case all orbits are dense, or
\item[(iii)] a Cantor set; in this case it is contained in the closure of any orbit and hence unique.
\end{itemize}
  \end{prop}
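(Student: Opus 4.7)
My approach is to apply Lemma~\ref{l21} to reduce to the case that $M$ is a $G$-invariant compact perfect nowhere dense subset of $S^{1}$, then show such an $M$ is a Cantor set, and finally establish uniqueness via the gap structure of $M$. Lemma~\ref{l21} applied to the continuum $S^{1}$ gives three options: $M$ is (i) a finite orbit, (ii) all of $S^{1}$ (in which case every orbit closure is a non-empty $G$-invariant closed set containing the minimal set $M=S^{1}$, forcing every orbit to be dense), or (iii) $G$-invariant, compact, perfect and nowhere dense. Thus only case (iii) requires further work.

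In case (iii), I would use the fact that any connected subset of $S^{1}$ is a point, a proper arc, or $S^{1}$ itself to deduce that $M$ is totally disconnected: a non-degenerate arc component of $M$ would contain a non-empty open subarc of $S^{1}$, contradicting nowhere density. Hence $M$ is a compact, metric, totally disconnected, perfect space, and the standard characterization of the Cantor set applies.

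For uniqueness, I would enumerate the connected components of $S^{1}\setminus M$ (the \emph{gaps} of $M$) as $(U_{n})_{n\geq 1}$; since these are pairwise disjoint sub-arcs of $S^{1}$, their lengths satisfy $\sum_{n}\ell(U_{n})\leq \ell(S^{1})$, so $\ell(U_{n})\to 0$. Each $g\in G$ is a homeomorphism of $S^{1}$ preserving $M$, hence permutes the gaps. Fix $p\in S^{1}$: if $p\in M$, then $\overline{G(p)}\supset M$ by minimality. If $p$ lies in a gap with endpoints $a,b\in M$, then for every $g\in G$ the point $g(p)$ lies in the gap with endpoints $g(a),g(b)$, so $d(g(p),g(a))\leq \ell(g((a,b)))$. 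Given $m\in M$ and $\varepsilon>0$, minimality of $M=\overline{G(a)}$ produces infinitely many $g$ with $d(g(a),m)<\varepsilon/2$; among these, only finitely many can satisfy $\ell(g((a,b)))\geq \varepsilon/2$, so some $g$ yields $d(g(p),m)<\varepsilon$. Hence $m\in\overline{G(p)}$, so $M\subset\overline{G(p)}$; applied to a point $p$ of any other minimal set $M'$ this forces $M\subset M'$, hence $M=M'$ by minimality.

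The main technical obstacle is the uniqueness step: one has to carefully combine two ``infinitely often'' conditions (minimality giving $g(a)$ close to $m$ for infinitely many $g$, and the finite-total-length estimate forcing $\ell(g((a,b)))$ to be small for all but finitely many $g$), and show that they can be achieved simultaneously. The orientation-preserving hypothesis enters implicitly in ensuring the family of gaps is well-structured and in ruling out pathologies in case (i) that are not relevant to cases (ii)--(iii).
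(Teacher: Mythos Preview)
The paper does not supply its own proof of Proposition~\ref{p32}: the result is quoted from \cite{Bek}, and the sentence ``We present its proof for completeness'' that follows refers to Corollary~\ref{c33}, not to this proposition. So there is no in-paper argument against which to compare your proposal.

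That said, your argument is the standard one and is essentially correct. The reduction via Lemma~\ref{l21}, the observation that a perfect nowhere dense closed subset of $S^{1}$ can contain no non-degenerate arc and is therefore totally disconnected (hence a Cantor set), and the uniqueness argument via the gap structure are all sound. One minor imprecision in the uniqueness step: the claim that ``among these, only finitely many $g$ can satisfy $\ell(g((a,b)))\geq\varepsilon/2$'' is not literally true, since the stabilizer in $G$ of a given gap may well be infinite. What \emph{is} true is that there are only finitely many gaps of length $\geq\varepsilon/2$, hence only finitely many possible values of $g(a)$ arising from such gaps (each $g$ being orientation-preserving, $g(a)$ is a fixed endpoint of $g((a,b))$); since $M$ is perfect, $G(a)$ contains infinitely many \emph{distinct} points in any neighbourhood of $m$, so some $g(a)$ close to $m$ must be the endpoint of a short gap. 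With that small rewording the argument is complete. Your remark that orientation preservation is used ``implicitly'' is accurate here; note, however, that the uniqueness conclusion in fact survives without it (cf.\ Corollary~\ref{c33}), since one can simply track both endpoints $g(a),g(b)$ of the image gap.
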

  \medskip

The following result is due to \cite{Bek}. We present its proof for completeness.

\begin{cor}\label{c33}
Let $G$ be a subgroup of $\mathrm{Homeo}(S^{1})$ and $M\subset S^{1}$ a minimal set of $G$. Then $M$ is
\begin{itemize}
 \item[(i)] a union of at most two finite $G^{+}$-orbits, (where $G^{+} = G\cap \textrm{Homeo}^{+}(S^{1})$), or
  \item[(ii)] $S^{1}$; in this case all orbits are dense,

  or
\item[(iii)] a Cantor set; in this case it is contained in the closure of any $G$-orbit and hence unique.
\end{itemize}
  \end{cor}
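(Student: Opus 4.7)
The plan is to reduce to Proposition~\ref{p32} by using the fact that $G^{+}$ is a normal subgroup of $G$ of index at most~$2$, so any $G$-minimal set is built from at most two $G^{+}$-minimal pieces. If $G = G^{+}$, Proposition~\ref{p32} already gives the result. Otherwise I would pick any $g_{0}\in G\setminus G^{+}$ and write $G = G^{+}\sqcup g_{0}G^{+}$, noting in particular that $g_{0}^{2}\in G^{+}$.

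First I would fix, by Zorn's lemma, a $G^{+}$-minimal subset $M_{0}\subseteq M$, and verify the decomposition $M = M_{0}\cup g_{0}(M_{0})$. The right-hand side is closed, and it is $G$-invariant: $G^{+}(M_{0})=M_{0}$ by minimality, while the normality of $G^{+}$ in $G$ together with $g_{0}^{2}\in G^{+}$ give $G^{+}(g_{0}(M_{0}))=g_{0}G^{+}(M_{0})=g_{0}(M_{0})$ and $g_{0}(g_{0}(M_{0}))=g_{0}^{2}(M_{0})=M_{0}$. Since $M_{0}\cup g_{0}(M_{0})\subseteq M$ is closed and $G$-invariant, $G$-minimality of $M$ forces equality.

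Next I would apply Proposition~\ref{p32} to the action of $G^{+}$ on its invariant set $M_{0}$ and split into the three resulting cases. If $M_{0}$ is a finite $G^{+}$-orbit, then, by normality of $G^{+}$, $g_{0}(M_{0})=g_{0}G^{+}(y)=G^{+}(g_{0}(y))$ is again a finite $G^{+}$-orbit, so $M$ is the union of at most two finite $G^{+}$-orbits, which is case~(i). If $M_{0}=S^{1}$, then $M=S^{1}$ and every $G$-orbit contains a dense $G^{+}$-orbit, which is case~(ii). Finally, if $M_{0}$ is a Cantor set, then $g_{0}(M_{0})$ is once more a $G^{+}$-minimal Cantor set; by the uniqueness clause of Proposition~\ref{p32}(iii) one must have $g_{0}(M_{0})=M_{0}$, hence $M=M_{0}$ is a Cantor set, and for every $x\in S^{1}$ one has $\overline{G(x)}\supseteq \overline{G^{+}(x)}\supseteq M_{0}=M$, yielding case~(iii) and its consequent uniqueness.

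The only delicate point I expect is the last case: one must simultaneously invoke normality of $G^{+}$ (to guarantee that $g_{0}(M_{0})$ is still $G^{+}$-minimal) and the uniqueness from Proposition~\ref{p32}(iii) in order to collapse $g_{0}(M_{0})$ onto $M_{0}$. Everything else is routine bookkeeping for the quotient $G/G^{+}\cong \mathbb{Z}/2\mathbb{Z}$.
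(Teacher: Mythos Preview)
Your proof is correct and follows essentially the same strategy as the paper's own argument: pick a $G^{+}$-minimal subset $M_{0}\subseteq M$, use normality of $G^{+}$ in $G$ to see that $M_{0}\cup g_{0}(M_{0})$ is $G$-invariant and hence equals $M$, and then run through the trichotomy of Proposition~\ref{p32}. The only cosmetic difference is that you establish the decomposition $M=M_{0}\cup g_{0}(M_{0})$ once at the outset, whereas the paper verifies $G$-invariance separately within the finite-orbit and Cantor cases.
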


  \begin{proof} As $M$ is closed and $G^{+}$-invariant, there exists a minimal set $A\subset M$ of $G^{+}$.
  If $A=S^{1}$ then $M = S^{1}$. So assume that $M\neq S^{1}$ and that $G\backslash G^{+}\neq \emptyset$.
 So let $h\in G\backslash G^{+}$. Then $h(A)$ is $G^{+}$-invariant (since for any $g\in G^{+}$, we have
 $h^{-1}gh\in G^{+}$ and hence $h^{-1}gh(A)=A$), moreover it is minimal for $G^{+}$.
 If $A$ is a Cantor set then by Proposition \ref{p32}, $h(A)= A$. Hence $A$ is $G$-invariant
 and therefore $A=M$. We conclude that $A$ is contained in the closure of any $G$-orbit and hence unique.
 If $A$ is a finite $G^{+}$-orbit, then so is $h(A)$.
 As $A\cup h(A)\subset M$ is $G$-invariant; indeed, for any $g\in G\backslash G^{+}$, we have $g(h(A))= A$ and
 $g(A) = gh(h(A)) = h(A)$ since
 $h^{2}(A)= A$ and $gh\in G^{+}$, hence $g(A\cup h(A)) = A\cup h(A)$. We conclude that $M = A\cup h(A)$.
 This completes the proof.
\end{proof}
\medskip

In particular:

\begin{cor}\label{c100} Let $G$ be a subgroup of $\mathrm{Homeo}(S^{1})$ and $M\subset S^{1}$ a minimal set of $G$.
Assume that $G$ has a finite orbit. Then $M$ is a finite orbit.
\end{cor}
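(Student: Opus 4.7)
The plan is to deduce Corollary \ref{c100} directly from Corollary \ref{c33} by eliminating the two cases that are incompatible with the existence of a finite $G$-orbit. Let $F=G(x_{0})$ be a finite $G$-orbit, which is automatically a non-empty closed $G$-invariant subset of $S^{1}$.

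First I would rule out case (ii), namely $M=S^{1}$. If $M$ were all of $S^{1}$, then minimality of $S^{1}$ itself would imply that $S^{1}$ has no proper non-empty closed $G$-invariant subset. But $F\subsetneq S^{1}$ is such a subset (since $F$ is finite while $S^{1}$ is uncountable), a contradiction.

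Next I would rule out case (iii), the Cantor set case. By the last clause of Corollary \ref{c33}(iii), if $M$ is a Cantor set then $M$ is contained in the closure of every $G$-orbit; in particular $M\subset \overline{F}=F$. This is impossible since $M$ is uncountable and $F$ is finite.

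Hence we are forced into case (i), so $M$ is a union of at most two finite $G^{+}$-orbits; in particular $M$ is finite. Since $M$ is minimal for $G$ and finite, it coincides with the $G$-orbit of any of its points, and is therefore a single finite $G$-orbit. There is no real obstacle here: the corollary is essentially a bookkeeping consequence of Corollary \ref{c33}, and the only thing to check is that the hypothesis (existence of a finite orbit) is incompatible with both the full-circle case and the Cantor case.
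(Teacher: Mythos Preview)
Your argument is correct and matches the paper's implicit approach: the paper presents Corollary \ref{c100} with the phrase ``In particular'' immediately after Corollary \ref{c33}, treating it as a direct consequence obtained by ruling out the full-circle and Cantor alternatives, exactly as you do. There is nothing to add.
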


%\begin{prop}\label{p300}Let $G$ be a subgroup of $\mathrm{Homeo}(S^{1})$. Then the union of all minimal sets of $G$ is closed in $S^{1}$.
%\end{prop}
%\medskip
%
%\begin{proof} If $G$ has no finite orbit, then it has a unique minimal set (Corollary \ref{c33}) and so the proposition follows.
%If $G$ has a finite orbit, then so is for $G^{+}$ and then by (\cite{Bek}, Theorem 1), the union of all minimal sets of $G^{+}$ is
%Fix$(G^{+})^{s}$, where $(G^{+})^{s}= \{g\in G^{+}: \textrm{there is }
%x\in S^{1}: \ g(x)=x\}$. It is easy to show that Fix$(G^{+})^{s}$ is closed in $S^{1}$. Since each minimal set of the group $G$
%is the union of not more than two minimal sets of the subgroup $G^{+}$, the proposition follows.
%\end{proof}
%\medskip

\begin{prop}\label{p35} Let $G$ be a subgroup of $\mathrm{Homeo}(S^{1})$. Assume that $G$ has a finite orbit. Then:

\begin{item}
 \item[(i)] All finite orbits have the same cardinal $p$ if $G$ is a subgroup of $\mathrm{Homeo}^{+}(S^{1})$.
 \item[(ii)] All finite orbits have cardinal $p$ or $2p$ if $G$ is a subgroup of $\mathrm{Homeo}(S^{1})$.
\end{item}
\end{prop}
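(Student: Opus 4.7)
The plan is to deduce (i) from the order-preserving nature of $G$'s action on a finite orbit, and then derive (ii) from (i) by passing to the orientation-preserving subgroup $G^{+} := G \cap \mathrm{Homeo}^{+}(S^{1})$, which has index at most $2$ in $G$.

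For (i), I would fix a finite $G$-orbit $F$ with $|F|=p$ and an arbitrary second finite orbit $F'$. The key observation is that every $g\in G$ is orientation-preserving, so the induced permutation of $F$ respects the cyclic order on $S^{1}$; together with transitivity of $G$ on $F$, this forces the image of $G$ in the symmetric group on $F$ to coincide with the cyclic rotation group of $F$. Consequently $G$ acts transitively on the $p$ open arcs $A_{0},\ldots,A_{p-1}$ comprising $S^{1}\setminus F$. If $F\cap F' \neq \emptyset$, then the two orbits coincide and $|F'|=p$. Otherwise $F'\subset \bigcup_{i} A_{i}$, and transitivity on arcs combined with $G$-invariance of $F'$ implies that each $A_{i}$ contains the same positive number $k$ of points of $F'$, giving $|F'|=kp$. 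Swapping the roles of $F$ and $F'$ yields $|F|=k'|F'|$ for some $k'\ge 1$, so $kk'=1$, forcing $k=k'=1$ and $|F'|=p$.

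For (ii), I would split on whether $G$ contains any orientation-reversing element. If not, $G=G^{+}$ and (i) applies. Otherwise $G = G^{+}\sqcup hG^{+}$ for some $h\in G\setminus G^{+}$, and $G^{+}$ inherits a finite orbit from $G$ (any $G^{+}$-orbit inside a finite $G$-orbit is finite). By (i), all finite $G^{+}$-orbits have the same cardinality $p$. Given a finite $G$-orbit $F$, pick a $G^{+}$-orbit $F_{0}\subset F$; then $F = F_{0}\cup h(F_{0})$, because $G\cdot F_{0} = G^{+}\cdot F_{0} \cup hG^{+}\cdot F_{0}$. Normality of $G^{+}$ in $G$ makes $h(F_{0})$ again a $G^{+}$-orbit, so it either equals $F_{0}$ (giving $|F|=p$) or is disjoint from it (giving $|F|=2p$).

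The main obstacle I anticipate is the transitivity-on-arcs step in (i), which rests on the elementary but essential fact that a transitive, cyclic-order-preserving subgroup of the symmetric group on a finite cyclically ordered set must be the full rotation group. Once that is in hand, the counting argument in (i) is immediate and (ii) follows as a clean corollary.
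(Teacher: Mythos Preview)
Your argument is correct. Part (ii) is essentially identical to the paper's: both decompose a finite $G$-orbit as $F_{0}\cup h(F_{0})$ with $F_{0}$ a $G^{+}$-orbit and $h\in G\setminus G^{+}$, using that $G^{+}$ is normal of index at most $2$.

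Part (i) diverges at the crucial step. Both proofs first obtain transitivity of $G$ on the $p$ arcs of $S^{1}\setminus F$, so that each arc meets the second finite orbit $F'$ in the same number $k\geq 1$ of points and hence $|F'|=kp$. To force $k=1$, the paper argues dynamically: taking $y$ to be the point of $F'\cap I_{i}$ nearest to the endpoint $a_{i}$, if a second point of $F'$ lies in $I_{i}$ then some $g\in G$ with $g(I_{i})=I_{i}$ sends $y$ strictly farther from $a_{i}$, and iterating $g$ yields infinitely many points in the finite orbit $F'$. You instead use a pure counting/symmetry argument: swapping the roles of $F$ and $F'$ gives $p=|F|=k'|F'|=k'kp$ for some integer $k'\geq 1$, whence $kk'=1$ and $k=1$. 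Your route is a bit slicker and avoids the iteration; the paper's version has the minor advantage of not requiring a second appeal to arc-transitivity (for $S^{1}\setminus F'$).
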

\medskip

\begin{proof} Let $O$ be a finite orbit of $G$ of cardinal $p$,
set $O = \{a_1, \dots, a_p=a_1\}$ in the natural order on $S^{1}$.

(i) Assume first that $G$ be a subgroup of $\mathrm{Homeo}^{+}(S^{1})$.
Let $G(x)$ be a finite orbit of $G$ through $x$ and denote by
$I_i = [a_i, a_{i+1}]\subset S^{1}$. We show that $G(x)\cap I_i\neq \emptyset$ for any $i$: Indeed, we have
$x\in I_{i_{0}}$ for some $i_0\in \{1, \dots, p\}$ and
$g(a_{i_{0}}) = a_{i}$ for some $g\in G$. Then $g(I_{i_{0}})= I_{i}$ and so $G(x)\cap I_i\neq \emptyset$.
Now let us show that $G(x)\cap I_i$ is reduced to a point: Let $y$ be the nearest
point of $G(x)\cap I_i$ to the point $a_i$.
Suppose that $G(x)\cap I_i$ contains other point than $y$. In this case, there is $g\in G$ such that
$g(y)\in I_i$ with $y\in (g(y), a_i)$. As $g\in \mathrm{Homeo}^{+}(S^{1})$ and
$g(I_{i}) = I_{i}$, so the set $\{g^{n}(y):n\geq 1\}$ is infinite and included in $G(x)$,
a contradiction. We conclude that $G(x)$ has cardinal $p$.

(ii) Now assume that $G\backslash G^{+}\neq \emptyset$, where $G^{+} = G\cap \textrm{Homeo}^{+}(S^{1})$. Let $G(x)$ be a finite orbit of $G$ through $x$.
Then $G(x) = O_1 \cup h(O_1)$, where $O_1$ is a finite $G^{+}$-orbit. If $h(O_1) = O_1$, then
card($G(x)) =p$, where $p$ is the cardinal of any finite $G^{+}$-orbit, by above. If $h(O_1)\neq O_1$, then card($G(x)) =2p$.
This completes the proof.
\end{proof}
\medskip

\textbf{Remark 1.} The property (ii) in Proposition \ref{p35} can occur; for example,
take $G =\{\textrm{id}, \sigma\}$, where
$\sigma(z)= \overline{z}:$ the conjugate of $z\in S^{1}$. In this case, $G$ has two fixed points
(i.e. of cardinal $1$) and all the other orbits are finite of cardinal $2$.
\medskip

\textbf{3.3. The graph case.} We have the following theorem:
\medskip

\begin{thm}\label{tr1} Let $X$ be a graph different from a circle and a group $G$ acting on $X$. Then a minimal set $M$
of $G$ is finite (in fact a finite orbit).
\end{thm}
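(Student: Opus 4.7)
The plan is to invoke the trichotomy from Lemma \ref{l21}: any minimal set $M$ of $G$ is either (i) a finite orbit, (ii) all of $X$, or (iii) a perfect, nowhere dense closed $G$-invariant subset of $X$. I would rule out cases (ii) and (iii), which leaves (i) as the only possibility. Observe that a finite minimal set is automatically a finite orbit, since for any $x \in M$ the closure $\overline{G(x)}$ is a nonempty closed $G$-invariant subset of $M$, forcing $M = \overline{G(x)} = G(x)$.

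Case (ii) is the easy step. Since $X \neq S^{1}$, the set $B(X) \cup E(X)$ is nonempty (a connected graph with neither branch points nor endpoints must be a circle), and it is finite because $X$ is a graph. By Lemma \ref{g1}, $G$ preserves both $B(X)$ and $E(X)$, so for any $v \in B(X) \cup E(X)$ the orbit $G(v)$ is contained in a finite set, giving a proper nonempty closed $G$-invariant subset of $X$. This contradicts $X$ being minimal.

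Case (iii) is the main content, and the key idea is to reduce it to the interval case (Lemma \ref{l31}) via an edge-stabilizer argument. The graph $X$ has only finitely many edges, each an arc, and $G$ permutes them (again by Lemma \ref{g1}). Hence for any edge $E$, the setwise stabilizer $G_E = \{g \in G : g(E) = E\}$ has finite index in $G$, bounded by the number of edges. Since $M$ is nonempty and the edges cover $X$, some edge $E$ meets $M$, and $M \cap E$ is a nonempty compact $G_E$-invariant subset of the arc $E$. Zorn's lemma yields a minimal closed $G_E$-invariant subset $M' \subseteq M \cap E$, and Lemma \ref{l31} forces $M'$ to be a finite $G_E$-orbit. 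Choosing coset representatives $g_1, \dots, g_n$ of $G_E$ in $G$, the set $G \cdot M' = \bigcup_{i=1}^{n} g_i(M')$ is a finite, nonempty, closed, $G$-invariant subset of $M$. Minimality of $M$ then forces $M = G \cdot M'$ to be finite, contradicting that $M$ is perfect and nonempty.

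The only real obstacle lies in case (iii): one must spot that the finiteness of the edge set makes the edge stabilizer a finite-index subgroup, thereby activating Lemma \ref{l31}. After that point the argument is a bookkeeping exercise, since a finite union of finite $G_E$-orbits is still finite.
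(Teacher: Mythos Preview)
Your argument is sound in outline but has one slip: the assertion that the edges are ``each an arc, and $G$ permutes them'' does not hold simultaneously in general. If you take edges to be the closures of the connected components of $X\setminus B(X)$ (the only definition under which Lemma~\ref{g1} forces $G$ to permute them), then an edge can be a \emph{circle} --- a loop through a single branch point $v$, as in a figure-eight or a lollipop graph --- so Lemma~\ref{l31} does not apply as stated. The patch is immediate: for such a loop $E$ the stabilizer $G_E$ must fix the unique point $v\in E\cap B(X)$, so $G_E$ acts on $E\cong S^{1}$ with a finite orbit and Corollary~\ref{c100} (in place of Lemma~\ref{l31}) gives that every $G_E$-minimal subset of $E$ is finite. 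With this one-line adjustment the rest of your proof goes through verbatim.

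Your route is genuinely different from the paper's. The paper does not invoke the trichotomy of Lemma~\ref{l21} or any edge-stabilizer argument; after disposing of the case $M\cap(B(X)\cup E(X))\neq\emptyset$, it fixes a branch point $v$, constructs an arc $I$ from $v$ to a point $a\in M$ with $I\setminus\{a\}$ contained in the component of $X\setminus M$ through $v$, and argues directly that the family $\{g(I):g\in G\}$ is finite (using that $G(v)$ is finite and $X$ is a graph), whence $G(a)$ is finite and $M=G(a)$. Your finite-index stabilizer reduction is more algebraic and makes the reduction to the interval/circle case completely transparent; the paper's argument is more geometric, tracking an explicit arc rather than an abstract $G_E$-minimal set. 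Both ultimately rest on the finiteness of $B(X)\cup E(X)$, exploited from different angles.
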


\begin{proof}
  The case where $X$ is a closed interval is already down (see Lemma \ref{l31}). So assume
that $X$ is not an interval. First we have $M\neq X$ since, by Lemma \ref{g1}, for any $b\in B(X)$, $G(b)\subset B(X)$,
hence $G(b)$ is finite. If $M\cap (B(X)\cup E(X))\neq \emptyset$, there is a point $v\in M\cap B(X)$
(resp. $e\in M\cap E(X)$), then
 $G(v) = M$ (resp.  $G(e)= M$) is finite by the minimality of $M$ and Lemma \ref{g1}. Assume that $M\cap (B(X)\cup E(X))=\emptyset$. Since $X$ is not an interval and not a circle, there is $v\in B(X)$. Let $U$ be the connected component of
$X\backslash M$ containing $v$. So there exist a point $a\in M$ and an arc $I$ with $\gamma(I)=\{a,v\}$ and such that
$U\cap I=I\setminus\{a\}$. For each $g\in G$, $g(I)$ is an arc with $\gamma(g(I))=\{g(v),g(a)\}$ and such that
$g(I)\cap g(U)= g(I)\setminus\{g(a)\}$. Because $X$ is a graph and the orbit $Gv$ is finite, the family of arcs
$\{g(I): \ g\in G\}$ is finite. Hence the orbit $Ga$ is finite and thus
$M = G(a)$ is a finite orbit.
  \end{proof}
\medskip

%\begin{prop}\cite{MN1}\label{tree} Let $T$ be a tree and a group $G$ acting on $T$.
%Then there exists a finite orbit containing
%at most two points.
%\end{prop}

\textbf{3.4. The dendrite case.} We recall following results:
\medskip

\begin{thm}[Dendrite case] $($\cite{MN1}, Theorem 3.1$)$\label{tr33}
Let $X$ be a dendrite, a group $G$ acting on $X$ and $M$ a
minimal set of $G$. Then $M$ is
\begin{itemize}
 \item[(i)] a finite orbit, if $E(X)$ is countable.

\item[(ii)] a finite orbit, or a Cantor set included into $E(X)$, if $E(X)$ is closed.
\end{itemize}
\end{thm}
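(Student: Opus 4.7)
The plan is to apply the dichotomy of Lemma \ref{l21}, reduce to the case where $X$ coincides with the convex hull $[M]$, and then handle the two endpoint hypotheses separately. The convex hull $[M]$ is a $G$-invariant sub-dendrite of $X$ since $g[M]=[gM]=[M]$, and the hypothesis on $E(X)$ passes to $E([M])$ via Lemma \ref{l34} in case (i) or Lemma \ref{l35}(ii) in case (ii), so we may assume $X=[M]$. A standard argument then gives $E(X)\subset M$: if $e\in E(X)\setminus M$, pick a small arc $[e,a]\subset X$ emanating from $e$ into an open neighborhood of $e$ disjoint from $M$, with $a$ a non-endpoint of $X$; then $X\setminus[e,a)=(X\setminus[e,a])\cup\{a\}$ is a proper sub-continuum of $X$ containing $M$, contradicting minimality of the convex hull.

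For case (ii), suppose $M$ is not a finite orbit. By Lemma \ref{l21}, $M$ is compact, perfect and nowhere dense, hence uncountable, and $X=[M]$ is non-degenerate so $E(X)\neq\emptyset$. Since $E(X)$ is closed, $G$-invariant and contained in $M$, minimality of $M$ yields $M\subset E(X)$. Moreover $E(X)$ is totally disconnected: any two distinct endpoints $e,e'$ are joined by the unique arc $[e,e']$ in $X$, whose interior consists of cut points and therefore lies outside $E(X)$, so by arcwise connectedness of connected subsets of a dendrite no connected subset of $E(X)$ can contain two distinct points. Hence $M$ is a perfect, compact, totally disconnected metric space, i.e., a Cantor set contained in $E(X)$.

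For case (i), again suppose $M$ is not a finite orbit, so $M$ is perfect and uncountable. As $\emptyset\neq E(X)\subset M$ and $E(X)$ is $G$-invariant, the closed $G$-invariant set $\overline{E(X)}\subset M$ forces $\overline{E(X)}=M$ by minimality, so the countable set $E(X)$ is dense in $M$. Being a $G_\delta$ subset of the Polish space $X$, the set $E(X)$ is itself Polish, and a non-empty countable Polish space must contain a relatively isolated point, for otherwise it would be a non-empty perfect Polish space, hence uncountable. Pick $e\in E(X)$ isolated in $E(X)$ and an open $U\subset X$ with $U\cap E(X)=\{e\}$. Any sequence in $E(X)$ converging to a point of $U$ eventually lies in $U$ and hence eventually equals $e$; thus $M\cap U=\overline{E(X)}\cap U=\{e\}$, so $e$ is isolated in $M$, contradicting the perfectness of $M$.

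The delicate step is case (i): because $E(X)$ need not be closed, the direct minimality argument of case (ii) fails, and the new input is the topological fact that a non-empty countable $G_\delta$ subset of a Polish space contains a relatively isolated point. Combined with the density equation $\overline{E(X)}=M$, this relative isolation transfers from $E(X)$ to $M$ and contradicts perfectness; a secondary subtlety is verifying the convex-hull reduction, namely that $E([M])\subset M$ in the first place.
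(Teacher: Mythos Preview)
This theorem is not proved in the present paper; it is quoted from \cite{MN1}. So there is no in-paper argument to compare against, and I assess your proposal on its own merits.

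The overall strategy is sound, but two steps are incomplete. First, in the proof that $E(X)\subset M$ after reducing to $X=[M]$, the set $X\setminus[e,a)$ need not be a sub-continuum: if branch points of $[M]$ accumulate at $e$---which Lemma~\ref{l35}(i) does not exclude even when $E([M])$ is closed---then $[e,a)$ is not open in $[M]$ and its complement fails to be closed or connected. The inclusion $E([M])\subset M$ is true, but a correct argument uses, for instance, that $[M]=\overline{\bigcup_{a,b\in M}[a,b]}$ together with continuity of the arc map $(a,b)\mapsto[a,b]$: an endpoint of $[M]$ lying in some $[a,b]$ with $a,b\in M$ cannot be an interior point of that arc (it would then be a cut point), hence equals $a$ or $b$; and a limit of such arcs is again an arc between points of $M$.

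Second, and more substantially for case~(ii), your reduction to $X=[M]$ only yields $M\subset E([M])$, whereas the theorem asserts $M\subset E(X)$ for the \emph{original} dendrite; an endpoint of a sub-dendrite need not be an endpoint of the ambient one. To recover the full conclusion, argue that if $M\cap E(X)=\emptyset$ (this set is closed and $G$-invariant, so by minimality it is empty or equals $M$), then every $x\in M$ is simultaneously an endpoint of $[M]$ and a cut point of $X$, so some component of $X\setminus\{x\}$ misses $[M]$ and therefore lies in a component $D_x$ of $X\setminus[M]$ with $r_{[M]}(D_x)=\{x\}$. Distinct points of $M$ give distinct such components, producing uncountably many components of the open set $X\setminus[M]$, impossible in a locally connected second-countable space. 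Your case~(i) argument is correct once one justifies that $E(X)$ is $G_\delta$: the set of cut points equals $\bigcup_{a,b\in D}(a,b)$ for any countable dense $D\subset X$, an $F_\sigma$ set.
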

\medskip

\begin{prop} $($\cite{MN1}, Corollary 5.5$)$\label{pr7} Under the hypothesis of Theorem \ref{tr33},
if the action has a finite orbit, then $M$ is either a finite orbit, or a Cantor set.
\end{prop}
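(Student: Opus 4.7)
The plan is to use the finite $G$-orbit $F$ to reduce to an application of Theorem~\ref{tr33}(ii) on a $G$-invariant sub-dendrite with closed endpoint set. The convex hull $[F]$ is a $G$-invariant sub-dendrite of $X$ whose endpoints lie in $F$ (since, by a standard minimality argument, $E([A]) \subseteq A$ for any closed $A$ in a dendrite), hence $[F]$ is a topological tree, i.e.\ a graph different from a circle. If $M\cap [F]\neq \emptyset$, then $M\cap [F]$ is a non-empty closed $G$-invariant subset of $M$, so by minimality $M\subseteq [F]$, and Theorem~\ref{tr1} immediately yields that $M$ is a finite orbit.

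Assume instead $M\cap [F]=\emptyset$. I would use the monotone retraction $r:=r_{[F]}:X\to [F]$, which is $G$-equivariant: for any $g\in G$, $g\circ r\circ g^{-1}$ is a monotone retraction onto $g([F])=[F]$, and such retractions are unique. Then $r(M)$ is a closed $G$-invariant subset of $[F]$, and it is minimal for $G$ on $[F]$ — any proper non-empty closed $G$-invariant $N\subsetneq r(M)$ would pull back to the proper non-empty closed $G$-invariant subset $M\cap r^{-1}(N)\subsetneq M$, contradicting minimality of $M$. Theorem~\ref{tr1} then gives $r(M)=\{a_1,\dots,a_k\}$, a finite $G$-orbit. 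Writing $K_i:=r^{-1}(a_i)$ and $M_i:=M\cap K_i$, the fibres $K_i$ are pairwise disjoint sub-dendrites permuted transitively by $G$, and each $M_i$ is a minimal set for the stabilizer $H_i:=\mathrm{Stab}_G(a_i)$ acting on $K_i$, with $a_i\in K_i\setminus M_i$ a fixed point of $H_i$.

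I would then pass to the $H_i$-invariant sub-dendrite $Y_i:=[M_i\cup\{a_i\}]\subseteq K_i$, whose endpoint set satisfies $E(Y_i)\subseteq M_i\cup\{a_i\}$. The goal is to show $E(Y_i)$ is closed in $Y_i$, after which Theorem~\ref{tr33}(ii) applies to the $H_i$-action on $Y_i$ and gives that $M_i$ is either a finite orbit or a Cantor set contained in $E(Y_i)$. Since $M=\bigsqcup_{i=1}^{k}M_i$ is a finite disjoint union of $G$-homeomorphic copies of $M_1$, it is either a finite orbit or a finite disjoint union of Cantor sets — and the latter is again a Cantor set (being compact, metrizable, perfect and totally disconnected) — completing the proof.

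The main obstacle is verifying that $E(Y_i)$ is closed in $Y_i$ without assuming $E(X)$ is closed. This is the crux of the argument and is where the minimality of $M_i$ under $H_i$ (forcing every closed $H_i$-invariant subset of $M_i$ to be empty or the whole $M_i$), together with the countability of the branch point set of any dendrite, is used to rule out pathological accumulation of endpoints of $Y_i$ inside $M_i$.
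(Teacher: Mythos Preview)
The paper does not supply its own proof of this proposition; it is quoted directly from \cite{MN1}, Corollary~5.5, so there is no in-paper argument against which to compare your approach.

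Judged on its own, your reduction is correct up to the step you yourself flag as the ``main obstacle'', but that step is a genuine gap and your sketch does not close it. Minimality of $M_i$ under $H_i$ tells you only that every non-empty \emph{closed} $H_i$-invariant subset of $M_i$ equals $M_i$. The set $E(Y_i)\cap M_i$ is indeed $H_i$-invariant and non-empty, but its closedness is precisely the point at issue, so minimality yields nothing. The complementary invariant set $M_i\setminus E(Y_i)$---those points of $M_i$ that are cut points of $Y_i$---is likewise not obviously closed, so minimality does not force it to be empty either. Countability of $B(Y_i)$ is of no direct help: even if one argues that a perfect $M_i$ cannot meet $B(Y_i)$ (which already requires more than you indicate), this says nothing about order-$2$ cut points, and a point $m\in M_i$ lying on the arc between two other points of $M_i\cup\{a_i\}$ is exactly such a cut point, with no mechanism in your outline to exclude it. Without $E(Y_i)$ closed you cannot invoke Theorem~\ref{tr33}(ii), and the argument halts before its conclusion. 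The proof in \cite{MN1} proceeds instead through their Theorem~5.4 (uniqueness of the infinite minimal set inside its own convex hull, also used later in the present paper in the proof of Theorem~\ref{t56}); that is a different line of attack, and you may want to consult it rather than attempting to force the reduction to Theorem~\ref{tr33}(ii).
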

\medskip

\textbf{3.5. The local dendrite case.} We have the following theorem:
\medskip

\begin{thm}\label{tr37} Let a group $G$ acting on a local dendrite $X$ different from a dendrite and $M$ a
minimal set of $G$. Then $M$ is either a finite orbit, or a Cantor set or a circle.
\end{thm}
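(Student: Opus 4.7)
The natural starting point is Lemma \ref{l21}, which gives three possibilities for $M$: a finite orbit (already one of the conclusions), all of $X$, or a compact, perfect, nowhere dense, $G$-invariant set. I would dispose of the case $M=X$ first and then tackle the perfect nowhere dense case.

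For $M=X$: since $X$ is not a dendrite, it contains at least one circle, and only finitely many, say $C_{1},\dots,C_{k}$. Homeomorphisms preserve sub-continua homeomorphic to $S^{1}$, so $G$ permutes $\{C_{1},\dots,C_{k}\}$, and $C=\bigcup_{i}C_{i}$ is a non-empty closed $G$-invariant subset of $X$. Minimality forces $C=X$. If $k\geq 2$, any two circles that meet do so in finitely many branch points of $X$, giving a non-empty finite $G$-invariant subset of $X$ and contradicting minimality. Hence $k=1$ and $X$ is a circle.

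For the perfect, nowhere dense case, my plan is to reduce to the graph, circle, and dendrite cases already handled, via a canonical $G$-equivariant retraction. Let $H$ be the smallest connected subgraph of $X$ containing $C_{1},\dots,C_{k}$; then $G(H)=H$. Each connected component $U$ of $X\setminus H$ has closure a dendrite meeting $H$ in a single point $p_{U}$ (two meeting points would produce a circle outside $H$, contradicting maximality), so the retraction $r\colon X\to H$ with $r|_{H}=\mathrm{id}$ and $r|_{\overline{U}}\equiv p_{U}$ is continuous and $G$-equivariant. The image $N:=r(M)$ is then a minimal set of $G$ on $H$: for any closed $G$-invariant $N'\subseteq N$, the set $r^{-1}(N')\cap M$ is closed and $G$-invariant in $M$, hence equals $M$, forcing $N\subseteq N'$. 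If $H$ is not a circle, Theorem \ref{tr1} makes $N$ a finite $G$-orbit, and the perfectness of $M$ forces every $p\in N$ to be an attachment point (otherwise $p$ would be isolated in $M$); for each such $p$, $M\cap r^{-1}(p)$ is a minimal set of the stabilizer $G_{p}$ on the sub-dendrite $r^{-1}(p)$, and the dendrite case (Theorem \ref{tr33}) makes it a Cantor set (a finite one would make $M$ finite), so $M$ is a finite disjoint union of Cantor sets, i.e., a Cantor set. If $H$ is a circle, Corollary \ref{c33} gives three options for $N$: a finite orbit (handled as above), a Cantor subset of $H$ (again reduced fiber-by-fiber to the dendrite case), or $N=H$; in the last subcase I would show $M=H$ by first establishing $H\subseteq M$ (orbits of points lying in appendages accumulate on $H$, since by Lemma \ref{l2} the diameters of pairwise disjoint appendages tend to zero, forcing the limit points of any such orbit onto $H$), and then invoking the minimality of $M$ to forbid any extra points.

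The main obstacle I expect is precisely this last subcase, $H=S^{1}$ with $N=H$: proving that $M$ cannot spill into the dendritic appendages. The delicate point is combining Lemma \ref{l2} with the density of the $G$-orbit on $H$ to push all limits of any orbit in $M$ onto $H$, and then invoking the minimality of $M$ (not merely its connectivity) to conclude $M=H$. A secondary concern is that Theorem \ref{tr33} requires the endpoint set of each fiber dendrite $r^{-1}(p)$ to be countable or closed, which is not automatic for a sub-dendrite of an arbitrary local dendrite; this may require a direct fiber-wise minimality argument (exploiting that $G_{p}$ has the fixed point $p$, hence a finite orbit, so that Proposition \ref{pr7} is available once any appropriate endpoint hypothesis is verified), or an appeal to a stronger dendrite minimality result from \cite{MN1}.
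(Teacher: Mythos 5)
Your overall strategy is viable and genuinely different in execution from the paper's. The paper also introduces the minimal invariant graph $Y$ containing all circles (your $H$), but instead of retracting onto it and analysing fibres, it splits on whether $M$ meets $Y$: if $M\cap Y\neq\emptyset$ then minimality immediately gives $M\subset Y$ and the graph/circle results (Theorem \ref{tr1}, Corollary \ref{c33}) finish; if $M\cap Y=\emptyset$ it collapses $Y$ to a single point $\gamma$, obtaining a dendrite on which the induced action has $\gamma$ as a global fixed point, and then quotes Proposition \ref{pr7} verbatim. That collapse is exactly what buys the paper its brevity: the global fixed point is the finite orbit that Proposition \ref{pr7} needs, and no endpoint hypothesis on the quotient dendrite is invoked --- which also answers your secondary concern about applying Theorem \ref{tr33} fibrewise: you should use Proposition \ref{pr7} (finite orbit supplied by the fixed attachment point $p$ of $G_p$), not Theorem \ref{tr33}, and then no countability or closedness of endpoints is required.

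Your declared ``main obstacle'' (the subcase $H=S^1$, $N=H$, with $M$ possibly spilling into the appendages) is in fact a non-issue, and the accumulation argument you sketch for it is the wrong tool. The clean observation is the one the paper leans on: $M\cap H$ is closed and $G$-invariant, so if it is non-empty then $M\subseteq H$ by minimality of $M$, and Corollary \ref{c33} applied on the circle $H$ gives a finite orbit, a Cantor set, or $M=H$ with no further work. If instead $M\cap H=\emptyset$, then $r(M)$ is the set of attachment points of the (at most countably many) components meeting $M$, hence a countable closed minimal set, hence finite by Lemma \ref{l21} (a countable set cannot be perfect); so the subcase $N=H$ simply never arises with $M$ outside $H$, and your fibre analysis applies with $N$ a finite orbit. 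Two smaller repairs: in your $M=X$ step, disjoint circles are ruled out by connectedness of $X$ rather than by intersection points, and two distinct circles can share an arc (a theta-graph), so ``finitely many branch points'' should be replaced by ``the finite, non-empty, invariant set of branch points of the graph $\bigcup_i C_i$''; alternatively, note that $M=X$ forces $X=\bigcup_i C_i$ to be a graph and apply Theorem \ref{tr1} directly. With these adjustments your proof closes, but the paper's collapse-to-a-point device avoids the fibre-by-fibre bookkeeping entirely.
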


Before the proof of Theorem \ref{tr37}, we introduce the invariant graph. Let $X$ be a local dendrite different from a dendrite.
We denote by $Y$ the minimal graph (in the sense of inclusion) which contain all the circles in $X$ (i.e.
the intersection of all graphs in $X$ that contain all the circles in $X$). By (\cite{Ah}, Proposition 3.6), $Y$ is $G$-invariant.

We define the quotient space: Fix a point $\gamma$ in $Y$ and collapse the graph $Y$ to the point $\gamma$,
we obtain the quotient space
$\widehat{X} = (X\backslash Y)\cup \{\gamma\}$. Let $\pi: X\to \widehat{X}$ be the quotient map defined by:
\begin{center}
    $\pi(x)=\left\{
                    \begin{array}{ll}
                      \gamma, \ & \textrm{if} \ x\in Y \\
                       x, \ & \textrm{if} \ x\in X\backslash Y \\
                    \end{array}
                    \right.$
                    \end{center}
\medskip

     We endow $\widehat{X}$ by the metric $\widehat{d}$ defined as follows:
\begin{center}
    $\widehat{d}(\pi (x),\pi (y)) = \left\{
                    \begin{array}{ll}
                      d(x,y), \ & \textrm{if} \ x, y\in X\backslash Y \\
                      \max\left(d(x,Y), d(y,Y)\right), \ &  \textrm{otherwise}\\
                    \end{array}
                    \right.$
                    \end{center}

Since $\widehat{d}(\pi (x),\pi (y))\leq d(x,y)$, for any $x, y\in X$, the map $\pi$ is continuous.
Therefore $\pi$ is closed and onto, which implies that $\widehat{X}$ is a continuum. As a consequence:

\begin{lem} $(\widehat{X},\widehat{d})$ is a dendrite.
\end{lem}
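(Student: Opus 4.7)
To show $(\widehat{X},\widehat{d})$ is a dendrite I verify the three defining properties: it is a continuum, it is locally connected, and it contains no simple closed curve. The first is already noted in the excerpt, since $\pi$ is a continuous closed surjection from the continuum $X$ onto $\widehat{X}$. For local connectedness I use the standard fact that the continuous image of a Peano continuum is a Peano continuum; $X$ is a Peano continuum, being a local dendrite, so $\widehat{X}=\pi(X)$ is locally connected.

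The central structural step is to show that for every connected component $U$ of $X\setminus Y$ the intersection $\overline{U}\cap Y$ reduces to a single point $p_{U}$ and that $\overline{U}$ is a dendrite. Suppose $\overline{U}\cap Y$ contained two distinct points $a,b$. Since $\overline{U}$ is a sub-continuum of the local dendrite $X$ it is itself a local dendrite, hence arcwise connected; pick $u\in U$ and arcs $\alpha,\beta\subset\overline{U}$ joining $a$ and $b$ respectively to $u$. Let $a'$ (resp.\ $b'$) be the last point of $Y$ encountered on $\alpha$ going from $a$ to $u$ (resp.\ on $\beta$ going from $b$ to $u$), which is well defined because $Y$ is closed. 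The corresponding terminal sub-arcs from $a'$ to $u$ and from $b'$ to $u$ have interiors in $X\setminus Y$, and being connected and containing $u$, they lie in the single component $U$. Concatenating these terminal sub-arcs and extracting an arc produces an arc in $X$ with endpoints in $Y$ whose interior meets $U\subset X\setminus Y$. Combining with an arc in the connected graph $Y$ joining the two endpoints yields a simple closed curve in $X$ that is not contained in $Y$, contradicting the defining property of $Y$. Hence $\overline{U}\cap Y=\{p_{U}\}$; and any circle in $\overline{U}$ would be a circle in $X$, forced into the singleton $\overline{U}\cap Y$, which is impossible. So $\overline{U}$ is a dendrite.

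Granting the structural claim, $\pi|_{\overline{U}}$ is a continuous injection from a compactum into a Hausdorff space, hence a homeomorphism onto $\pi(\overline{U})=\{\gamma\}\cup U$, which is then a dendrite. Thus $\widehat{X}=\bigcup_{U}\pi(\overline{U})$ is the wedge of these dendrites at the common point $\gamma$. A simple closed curve $C\subset\widehat{X}$ would then be forced into a single $\pi(\overline{U})$: if $\gamma\notin C$, the isometric identification $\pi|_{X\setminus Y}$ (by the very definition of $\widehat{d}$) places the connected set $C$ inside a single component $U$; if $\gamma\in C$, then $C\setminus\{\gamma\}$ is a connected subset of $X\setminus Y$, again contained in a single $U$, so $C\subset\pi(\overline{U})$. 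Either way we contradict that $\pi(\overline{U})$ is a dendrite.

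The main obstacle is the degenerate configuration in the structural argument where $a'=b'$ and the two terminal sub-arcs coincide, in which case the naive concatenation does not immediately produce two distinct endpoints or a circle through $U$; the resolution uses the freedom to vary the auxiliary point $u\in U$ together with the local dendritic structure of $\overline{U}$ (and Lemma \ref{l2} controls the geometry of the pieces $U$) to force distinct terminal sub-arcs.
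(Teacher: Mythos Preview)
The paper does not prove this lemma; it is asserted ``as a consequence'' of the fact that $\widehat{X}$ is a continuum and left without argument. Your outline is therefore the right one: $\widehat{X}$ is a Peano continuum (continuous image of the Peano continuum $X$), and one must rule out simple closed curves.

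The structural fact you isolate --- that $\overline{U}\cap Y$ is a single point for every component $U$ of $X\setminus Y$ --- is exactly what the paper cites a few lines \emph{after} the lemma as \cite[Lemma~2.11]{Ah}. You should simply invoke that reference. Your own attempted proof of it has the gap you flag in the last paragraph, and the sketched resolution is not a proof: ``varying $u$'' does not visibly force $a'\neq b'$ (if there is a single cut point $c\in Y$ through which every arc from $U$ to $Y$ must pass, one gets $a'=b'=c$ for \emph{every} $u$), and Lemma~\ref{l2} concerns diameters of pairwise disjoint connected sets and has no evident bearing here. The actual reason the fact holds is the \emph{minimality} of the graph $Y$: an arc in $\overline{U}$ joining two distinct points of $Y$ whose interior meets $U$ would, together with an arc in $Y$, produce a circle of $X$ not contained in $Y$, contradicting the defining property of $Y$. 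Making this airtight requires handling the possibility that the extracted arc lies entirely in $\overline{U}\cap Y$, which is precisely what \cite[Lemma~2.11]{Ah} does.

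Once the structural claim is granted, your argument is correct: each $\pi(\overline{U})$ is homeomorphic to the dendrite $\overline{U}$, the pieces are glued only at $\gamma$, and any simple closed curve in $\widehat{X}$ --- whether or not it passes through $\gamma$ --- is forced into a single $\pi(\overline{U})$, a contradiction.
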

\medskip

We define the new generated group $\widehat{G}: =\{\widehat{g}: g\in G\}$, where
$\widehat{g}: \widehat{X} \longrightarrow \widehat{X}$ is defined as follows:
$\widehat{g}(\pi(x)) = \pi(g(x)), \textrm{ for any } x\in X$. It is easy to see that $\widehat{G}$ is a
group acting by homeomorphisms on $\widehat{X}$.

Set $X\backslash Y = \bigcup_{i\in \mathcal{A}}C_{i}$,
where the $C_{i}$ are the connected
components of $X\backslash Y$ and $\mathcal{A}$ is at most countable. By (\cite{Ah}, Lemma 2.11), for
any $i\in \mathcal{A}$, $\overline{C_{i}}\cap Y$ is
reduced to a point. Let $\mathcal{A}^{\prime}\subset \mathcal{A}$.
For each $k\in \mathcal{A^{\prime}}\subset\mathcal{A}$, we define the set $C^{k}$ of $X$ and the point $z_{k}$ of $Y$ by
$C^{k}=\underset{i\in \mathcal{A^{\prime}}_{k}}\cup\overline{C_{i}}$, where $C_{i}$ are the connected components of
$X\backslash Y$
and $\mathcal{A^{\prime}}_{k}=\{i\in\mathcal{A}: \overline{C_{i}}\cap Y = \{z_{k}\}\}$.
\bigskip

\textit{Proof of Theorem \ref{tr37}}. If $M\cap Y\neq\emptyset$, then $M\subset Y$  is either a finite orbit, or a Cantor set or a circle and in this
later case, $Y$ is a circle (Theorem \ref{tr1}).
If $M\cap Y=\emptyset$, then $M\subset X\setminus Y$. By collapsing the graph $Y$ to a point $\gamma$, we get a
new dendrite $Z$ and a new generated action on $Z$ having $\gamma$ as a global fixed point.
Now apply Proposition \ref{pr7}
to obtain that $M$ is either a finite orbit, or a Cantor set. \qed

\begin{prop}\label{pr88} Let $X$ be a local dendrite, a group $G$ acting on $X$ and $M$ a minimal set of $G$.
Assume that $G$ has a finite orbit. Then $M$ is either a finite orbit, or a Cantor set.
\end{prop}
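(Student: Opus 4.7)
The plan is to invoke Theorem \ref{tr37}, which classifies minimal sets of $G$ on a local dendrite as a finite orbit, a Cantor set, or a circle. When $X$ is itself a dendrite, the invariant graph $Y$ is empty and the circle case cannot occur by Proposition \ref{pr7}, so we may assume $X$ is not a dendrite. Under the hypothesis that $G$ has a finite orbit, the whole proof reduces to ruling out the possibility that $M$ is a circle.

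Suppose for contradiction that $M$ is a circle. From the proof of Theorem \ref{tr37}, this forces $M \subset Y$ and moreover $Y = M$ is itself a circle, so the restricted action of $G$ on $Y$ is a subgroup of $\mathrm{Homeo}(Y) \cong \mathrm{Homeo}(S^1)$.

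The key step is to produce a finite orbit of this restricted action inside $Y$. Let $F$ be the given finite $G$-orbit. If $F \cap Y \neq \emptyset$, then $F \cap Y$ is a nonempty finite $G$-invariant subset of $Y$, immediately giving a finite $G|_Y$-orbit. Otherwise $F \subset X \setminus Y$; pick $y \in F$ and let $C$ be the connected component of $X \setminus Y$ containing $y$. Since $g(Y) = Y$ for every $g \in G$, the group $G$ permutes the connected components of $X \setminus Y$, and because $F$ meets only finitely many components, the orbit of $C$ in the set of components is finite. By (\cite{Ah}, Lemma 2.11) each such component $C'$ satisfies $\overline{C'} \cap Y = \{z_{C'}\}$ for a single point $z_{C'} \in Y$, and the identity
\[
g(z_{C'}) \;=\; g(\overline{C'} \cap Y) \;=\; \overline{g(C')} \cap Y \;=\; z_{g(C')}
\]
shows that $C' \mapsto z_{C'}$ is $G$-equivariant. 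Hence the orbit of the base point $z_C \in Y$ under $G|_Y$ is finite.

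In either case, $G|_Y$ admits a finite orbit on the circle $Y$, so by Corollary \ref{c100} every minimal set of $G|_Y$ is a finite orbit. Since $M \subset Y$ is closed and $G$-invariant, it is minimal for $G|_Y$ as well, forcing $M$ to be finite and contradicting that $M$ is a circle. The main obstacle is the middle step, namely transferring a finite $G$-orbit sitting entirely in $X \setminus Y$ to a finite orbit on $Y$; once the $G$-equivariance of the base-point map is in place, Corollary \ref{c100} finishes the argument.
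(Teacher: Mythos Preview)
Your proof is correct and follows essentially the same approach as the paper. The paper isolates the transfer of a finite orbit from $X\setminus Y$ to $Y$ as Lemma~\ref{bn} (proved via Lemma~\ref{f}), then splits into the cases $M\cap Y\neq\emptyset$ and $M\cap Y=\emptyset$; you instead begin with the trichotomy of Theorem~\ref{tr37} and only rule out the circle case, but your base-point equivariance argument is exactly the content of Lemma~\ref{bn}, just proved directly rather than by citing Lemma~\ref{f}.
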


The proof is a consequence of two lemmas.
%\begin{lem} $($\cite{Ah}, Lemma 3.9$)$\label{pp}
%Under the notation above, the $(C^{k})_{k\in \mathcal{A^{\prime}}}$ are pairwise disjoint sub-dendrites of $X$.
%\end{lem}
%\medskip

\begin{lem} $($\cite{Ah}, Lemma 3.8$)$ \label{f} Under the notation above, let $f: X\to X$ be an onto monotone
local dendrite map.
Let $x\in C^{k}$ for some $k\in\mathcal{A^{\prime}}$ and let $f^{n}(x)\in C^{i}$ for some
$i\in\mathcal{A^{\prime}}$ and $n\in\mathbb{N}$.
Then $f^{n}(z_{k})=z_{i}$.
\end{lem}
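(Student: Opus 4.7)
The plan is to combine monotonicity of $f^n$ with the one-point separation structure of each $C^j$ to force $z_k$ into the sub-continuum $(f^n)^{-1}(z_i)$. The geometric engine is that $C^j\cap Y=\{z_j\}$ for every $j\in\mathcal{A}'$, which together with the closedness of $C^j$ gives a disjoint open decomposition $X\setminus\{z_j\}=(C^j\setminus\{z_j\})\sqcup(X\setminus C^j)$; any connected subset of $X$ that meets both pieces must pass through $z_j$. Closedness of $C^j$ needs justification when $\mathcal{A}'_j$ is infinite, and this is where Lemma~\ref{l2} enters: the diameters of the closures $\overline{C_i}$, $i\in\mathcal{A}'_j$, tend to zero while each contains $z_j$, so any limit point outside a single $\overline{C_i}$ must coincide with $z_j$.

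First I would record that $f^n(Y)=Y$; this follows from $f$ being a monotone surjection together with the minimality of $Y$ among graphs containing all the circles of $X$ (cf.\ \cite{Ah}, Proposition~3.6, in the single-map formulation). The case $x=z_k$ is then trivial: $f^n(z_k)\in f^n(Y)\cap C^i=Y\cap C^i=\{z_i\}$. So assume $x\neq z_k$, pick $j_0\in\mathcal{A}'_k$ with $x\in C_{j_0}$, and set $K:=(f^n)^{-1}(z_i)$; since $f^n$ is monotone, $K$ is a sub-continuum of $X$.

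Next I would show $K\cap\overline{C_{j_0}}\neq\emptyset$. If $f^n(x)=z_i$ this is immediate. Otherwise $f^n(x)\in C^i\setminus\{z_i\}$, and I argue by contradiction, assuming $f^n(z_k)\neq z_i$: since $f^n(z_k)\in Y$, necessarily $f^n(z_k)\in Y\setminus\{z_i\}\subseteq X\setminus C^i$. The continuum $f^n(\overline{C_{j_0}})$ now meets both open sets in the decomposition of $X\setminus\{z_i\}$, so connectedness forces $z_i\in f^n(\overline{C_{j_0}})$, giving $\overline{C_{j_0}}\cap K\neq\emptyset$. Likewise, since $z_i\in Y=f^n(Y)$, some $y\in Y$ satisfies $f^n(y)=z_i$, so $K\cap Y\neq\emptyset$.

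Finally I apply the analogous separation at $z_k$. Suppose $z_k\notin K$; then the sub-continuum $K$ lies entirely in one of the two disjoint open pieces of $X\setminus\{z_k\}$. But $K$ contains a point of $\overline{C_{j_0}}\setminus\{z_k\}\subseteq C^k\setminus\{z_k\}$ and also a point of $Y\setminus\{z_k\}\subseteq X\setminus C^k$, a contradiction. Hence $z_k\in K$, i.e.\ $f^n(z_k)=z_i$. The main obstacle I expect is the bookkeeping needed to ensure that $C^k$ and $C^i$ really are closed when $\mathcal{A}'_k$ or $\mathcal{A}'_i$ is infinite, which is the one place where Lemma~\ref{l2} is indispensable; the remainder is a clean application of the principle that the preimage of a sub-continuum under a monotone surjection is again a sub-continuum.
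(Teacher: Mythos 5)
Your argument is correct, and it is worth noting at the outset that this paper does not actually prove Lemma~\ref{f}: the statement is imported verbatim from \cite{Ah} (Lemma~3.8), so there is no in-paper proof to compare against. Your write-up is therefore a self-contained reconstruction, and it holds together. The two pillars are sound: (a) each $C^{j}$ is closed with $C^{j}\cap Y=\{z_{j}\}$, so that $X\setminus\{z_{j}\}$ splits into the two disjoint open sets $C^{j}\setminus\{z_{j}\}=\bigcup_{i\in\mathcal{A}'_{j}}C_{i}$ (open because components of the open set $X\setminus Y$ are open in a locally connected continuum) and $X\setminus C^{j}$, whence any connected set meeting both must contain $z_{j}$; your use of Lemma~\ref{l2} to get closedness of $C^{j}$ when $\mathcal{A}'_{j}$ is infinite is exactly the right (and necessary) ingredient, since the closures $\overline{C_{i}}$ all share the point $z_{j}$ and their diameters tend to zero. (b) $K=(f^{n})^{-1}(z_{i})$ is a subcontinuum because a finite composition of monotone surjections of a continuum is monotone. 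The one external input you rely on, $f^{n}(Y)=Y$, is precisely the content of \cite{Ah}, Proposition~3.6, which this paper also invokes (for the group case) without proof, so leaning on it is consistent with the ambient setup; note that you genuinely need the equality and not just $f(Y)\subset Y$, since you use surjectivity of $f^{n}$ on $Y$ to produce a point of $K\cap Y$. The only cosmetic remark is that your contradiction hypothesis $f^{n}(z_{k})\neq z_{i}$ enters midway (to place $f^{n}(z_{k})$ in $X\setminus C^{i}$) and is then reused at the end as ``$z_{k}\notin K$''; it would read more cleanly to assume it once at the start of the non-trivial case, but the logic is not affected.
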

\medskip

\begin{lem}\label{bn} Let $X$ be a local dendrite, a group $G$ acting on $X$ and $M$ a
minimal set of $G$. Assume that $G$ has a finite orbit. Then $G$ has a finite orbit in $Y$.
\end{lem}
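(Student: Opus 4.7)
The plan is to exploit the $G$-invariance of the canonical graph $Y$ together with the equivariance of the ``attachment point'' map from components of $X\setminus Y$ to $Y$ furnished by Lemma \ref{f}. Since $Y$ is undefined when $X$ is itself a dendrite (in which case Proposition \ref{pr88} follows directly from Proposition \ref{pr7} and the lemma is not needed), I assume throughout that $X$ is a local dendrite different from a dendrite, so that $Y$ and the sets $C^{k}$, $z_{k}$ are at our disposal.

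I would start from a finite orbit $F=G(x_0)$ provided by the hypothesis and split into cases using the $G$-invariance of $Y$. Either $F\subset Y$ or $F\cap Y=\emptyset$: if some $g(x_0)\in Y$ then $x_0\in g^{-1}(Y)=Y$, so the whole orbit already lies in $Y$. In the first case $F$ is itself a finite orbit contained in $Y$ and we are done.

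The substantive case is $F\cap Y=\emptyset$, with $x_0\in X\setminus Y$. There is then a unique $k_0\in\mathcal{A}'$ with $x_0\in C^{k_0}$; I let $z_{k_0}\in Y$ be the associated attachment point and claim that $G(z_{k_0})$ is finite. For each $g\in G$, the image $g(x_0)$ lies in exactly one $C^{k_g}$, and since $g\colon X\to X$ is a homeomorphism, hence in particular onto and monotone, Lemma \ref{f} applied with $n=1$ yields $g(z_{k_0})=z_{k_g}$. The assignment $g\mapsto C^{k_g}$ factors through $g\mapsto g(x_0)\in F$, hence takes only finitely many values, so $G(z_{k_0})=\{z_{k_g}:g\in G\}\subset Y$ is finite, which is exactly what is to be proved.

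The one point that requires care is the invocation of Lemma \ref{f}: I would verify that every $g\in G$ qualifies as an onto monotone self-map of $X$ (surjectivity is automatic and monotonicity is immediate since the continuous inverse sends connected sets to connected sets). Once this is in place, the remaining argument is purely combinatorial bookkeeping and presents no further obstacle.
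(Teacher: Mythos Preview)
Your argument is correct and follows essentially the same route as the paper: both proofs pass from the finite orbit $G(x_0)$ outside $Y$ to the attachment point $z_{k_0}$ and use Lemma~\ref{f} (applied to the homeomorphisms $g\in G$, which are onto and monotone) to conclude that $G(z_{k_0})$ is finite. The only cosmetic difference is that the paper phrases the finiteness via $g_i^{-1}g(z_k)=z_k$ whenever $g(x_0)=g_i(x_0)$, whereas you equivalently observe that $g\mapsto z_{k_g}$ factors through the finite set $F$.
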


\begin{proof} Let $x\in X$ with $G(x)$ finite. If $x\in Y$ then $G(x)\subset Y$. Otherwise, there exists
$k\in\mathcal{A^{\prime}}$ such that $x\in C^{k}$. Set $G(x) = \{x, g_{1}x, \dots, g_{p}x\}$ and let $g\in G$.
So there exists
$i\in \{1, \dots, p\}$ such that $gx = g_{i}x$ and thus $g_{i}^{-1}gx=x$. By Lemma \ref{f}, $g_{i}^{-1}g(z_{k}) = z_{k}$, where $C^{k}\cap Y = \{z_{k}\}$
and so $G(z_{k})$ is a finite orbit in $Y$.
\end{proof}

\begin{proof}[Proof of Proposition \ref{pr88}] Assume that $G$ has a finite orbit. Then by Lemma \ref{bn},
$G$ has a finite orbit in $Y$. If $M\cap Y\neq \emptyset$, then $M\subset Y$ (since $M$ is a minimal set of $G$).
Hence $M$ is a finite orbit; this follows from Theorem \ref{tr1} if $Y$ is different from a circle and from Corollary \ref{c100} if $Y$ is a circle. If $M\cap Y = \emptyset$, then by Theorem \ref{tr37}, $M$ is either a finite orbit, or a Cantor set.
\end{proof}
\medskip

\section{\bf Minimal sets in the hyperspace}

Given a continuum $X$ with a metric $d$, we denote by $2^{X}$ the hyperspace
of all nonempty closed subsets of $X$. For any two subsets $A$ and $B$ of
$X$, we denote by $d(A,B) = \inf_{x\in A, y\in B}d(x,y)$ and $d(x,A) = d(\{x\},A)$.
The Hausdorff metric $d_{H}$ on $2^{X}$ is defined as follows: for $A, B\in 2^{X}$,
\begin{center}
$d_{H}(A,B) = \max\{\sup_{x\in A}d(x,B), \ \sup_{y\in B}d(y, A)\}$.
\end{center}

This defines a distance on $2^{X}$ (\cite{Nadler}, Theorem 4.2). With this distance,
$2^{X}$ is a compact metric space (\cite{Nadler}, Theorem 4.3).
If $X$ is a continuum and $f: X\to X$ is a map we can consider the map  $2^{f}: 2^{X}\to 2^{X}$  (called the induced map) defined as follows:
$2^{f}(A)=f(A)$, for each $A\in 2^{X}$. If $f$ is continuous, then $2^{f}$ is also continuous (\cite{Illanes99}, Lemma 13.3).
\bigskip

The aim of this section is to prove the following theorem which generalizes the author's theorem
(\cite{MN1}, Theorem 6.9) to local dendrites.
\medskip

\begin{thm}\label{t610} Let $G$ be a group acting on a local dendrite $X$. Then the set of all minimal sets of $G$
endowed with the Hausdorff metric is compact. This holds if $X$ is a graph.
\end{thm}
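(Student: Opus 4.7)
The plan is to show $\mathcal{M}(G)$, the set of minimal sets of $G$, is closed in the compact space $(2^X, d_H)$. Since $(2^X, d_H)$ is compact by \cite{Nadler}, Theorem 4.3, closedness will yield compactness. Let $(M_n)_{n\ge 1}\subset \mathcal{M}(G)$ converge in $d_H$ to some $M\in 2^X$; I will show $M\in \mathcal{M}(G)$.

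First, $M$ is nonempty and closed automatically. For $G$-invariance of $M$, fix $g\in G$: the induced map $2^g:2^X\to 2^X$ is continuous (\cite{Illanes99}, Lemma 13.3) and fixes each $M_n$, so $g(M)=\lim_n g(M_n)=M$.

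The crux is to show $M$ is minimal. Suppose for contradiction it is not; by Lemma \ref{l21} there is a proper minimal subset $M_0\subsetneq M$. Pick $y\in M\setminus M_0$ and $x\in M_0$, and set $3\eta:=d(y,M_0)>0$. By Hausdorff convergence pick $x_n,y_n\in M_n$ with $x_n\to x$ and $y_n\to y$; by the minimality of $M_n$, choose $g_n\in G$ with $d(g_n(x_n),y_n)\to 0$, so $g_n(x_n)\to y$. Since $M_0$ is $G$-invariant and $x\in M_0$, $g_n(x)\in M_0$, so for $n$ large
\[
d(g_n(x),g_n(x_n))\ge d(g_n(x),y)-d(g_n(x_n),y)>2\eta.
\]
On the other hand $d(x,x_n)\to 0$, so by Lemma \ref{diam} the short arcs $[x,x_n]$ (well-defined in a dendritic neighbourhood of $x$ for large $n$) satisfy $\mathrm{diam}([x,x_n])\to 0$. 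Hence the images $\alpha_n:=g_n([x,x_n])$ are arcs in $X$ with $\mathrm{diam}(\alpha_n)>2\eta$, connecting $g_n(x)\in M_0$ to $g_n(x_n)\to y\notin M_0$.

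The finishing step is to extract from $(\alpha_n)$ a subsequence of pairwise disjoint arcs: by Lemma \ref{l2} their diameters would then tend to zero, contradicting $\mathrm{diam}(\alpha_n)>2\eta$. This extraction is the main obstacle and relies on the classification from Theorem \ref{tr37}: after passing to a subsequence $g_n(x)\to p\in M_0$, and $M_0$ is either a finite orbit, a Cantor set, or a circle. In each case one uses the local structure of $X$ at $p$ (countability of branch points \cite{Kur}, invariance of branch and endpoint sets under $G$ via Lemma \ref{g1}, and the finite set of directions at $p$ in the local dendrite) together with the fact that the other endpoints $g_n(x_n)$ converge to the common point $y\notin M_0$, to truncate each $\alpha_n$ slightly away from $p$ and select a pairwise disjoint subsequence of the truncations. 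The graph case (where $X$ is a graph) is the transparent model for this argument, since by Theorem \ref{tr1} $M_0$ is a finite orbit and, after passing to a subsequence, $g_n(x)$ is constant.
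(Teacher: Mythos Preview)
Your overall strategy --- show closedness of $\mathcal{M}(G)$ in $2^X$, assume a limit $M$ fails to be minimal, and derive a contradiction via Lemma~\ref{l2} applied to large pairwise disjoint arcs --- is appealing, but the final extraction step does not go through, and this is a genuine gap rather than a technical detail.

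The problem is this: your arcs $\alpha_n=g_n([x,x_n])$ all run from $g_n(x)\to p\in M_0$ to $g_n(x_n)\to y$. In a local dendrite (and already in a tree or graph) any family of arcs whose endpoints converge to two fixed distinct points $p$ and $y$ will, after passing to a subsequence, share a common subarc. Concretely, in a dendrite the arc between two points is unique, so $\alpha_n$ must equal $[g_n(x),g_n(x_n)]$, and these converge in $d_H$ to the nondegenerate arc $[p,y]$; hence any two of them overlap on a neighbourhood of the midpoint of $[p,y]$. In a graph or local dendrite there are only finitely many circles, hence only finitely many arcs between two given points, and the same overlap conclusion follows after refining the subsequence. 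Truncation near $p$ does not help: the truncated arcs still run from near $p$ to near $y$ and still share the same core subarc. So you cannot produce infinitely many pairwise disjoint connected sets of diameter $>2\eta$, and the intended application of Lemma~\ref{l2} collapses. Even in the ``transparent'' graph case you highlight (where $g_n(x)\equiv q$), every $\alpha_n$ is an arc from $q$ into a fixed small neighbourhood of $y$; after a subsequence they all leave $q$ along the same edge, and overlap.

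The paper avoids this obstruction by a completely different route: it never argues by contradiction with Lemma~\ref{l2}. Instead it exploits the classification of minimal sets together with a uniform cardinality bound. On a circle or on a graph, one shows (Proposition~\ref{p35}, Lemmas~\ref{gr1} and~\ref{gr2}) that once there is a finite orbit, every finite orbit meeting a given edge or one-vertex circle has cardinality $p$ or $2p$ for a fixed $p$, and moreover is parameterised by finitely many group elements $g_0,\dots,g_{p-1}$ (and possibly $h_0,\dots,h_{p-1}$). This lets one write $M_n=\{g_i(x^n)\}$ (or the $2p$ variant) explicitly, pass the single parameter $x^n$ to a limit $x$, and read off that $M=\{g_i(x)\}$ is again an orbit. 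The general local dendrite case then reduces to the graph case via the invariant graph $Y$ and the collapsing map $\pi$. If you want to repair your approach, you would need an argument that exploits not the \emph{disjointness} of the $\alpha_n$ but rather the fact that they are images of arbitrarily short arcs under homeomorphisms --- this points toward an equicontinuity-type statement, which is essentially what the paper's cardinality control provides in disguise.
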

\medskip

\textit{Proof.} Let $(M_{n})_{n\in \mathbb{N}}$ be a sequence of minimal sets of $G$ converging
in the Hausdorff metric to a closed subset $M$ of $X$. Then $M$ is closed and $G$-invariant.
We distinguish three cases.
\bigskip

 \textbf{Case 1: $X$ is a circle.} If $G$ has no finite orbit, then it has a unique minimal set
(Corollary \ref{c33}) and so the theorem follows.
If $G$ has a finite orbit, then by Corollary \ref{c100}, each $M_{n}$ is a finite orbit.

Assume that $G$ be a subgroup of $\mathrm{Homeo}^{+}(S^{1})$.
Then by Proposition \ref{p35},
 all finite $G$-orbits have the same cardinal, say $p$. So card($M_{n}) = p$ for any $n$.
Therefore the limit set $M$ is finite of cardinality at most $p$; this follows from the fact that
the hyperspace of finite subsets of $X$ of cardinality
at most $p$ is a continuum (see \cite{bu}).

Since $M$ is $G$-invariant, so it is a single finite orbit (otherwise, it contains a finite orbit with cardinal less than $p$, a contradiction). We conclude that $M$ is a finite orbit which is a minimal set of $G$.

Now assume that $G\backslash G^{+}\neq \emptyset$, where $G^{+} = G\cap \textrm{Homeo}^{+}(S^{1})$. Let $h\in G\backslash G^{+}$.
By the proof of Corollary \ref{c33}, $M_n= O_n \cup h(O_n)$, where $O_n$ is a finite $G^{+}$-orbit. If
$h(O_n)= O_n$, for infinitely many $n$, then by above, $M$ is a
finite $G^{+}$-orbit. One can suppose that $h(O_n) \neq O_n$, for every $n$. Then card($M_n)= 2p$ and so
card($M)=2p$ and $M = O\cup h(O)$, where $O$ is a finite $G^{+}$-orbit and hence $M$ is a minimal
set for $G$.
%
%We may assume that ($O_n)_{n}$ converges in the Hausdorff metric to a necessary finite $G^{+}$-orbit $O$ by above. Then we also have that
%($h(O_n))_{n}$ converges in the Hausdorff metric to $h(O)$ which is a finite $G^{+}$-orbit. Now, let $x\in M$. Then
%$\underset{n\to +\infty}\lim d(x, \ O_n \cup h(O_n)) = 0$. So there exists $c_n\in O_n \cup h(O_n)$ such that
%$\underset{n\to +\infty}\lim d(x, \ c_n) = 0$. If $c_n\in O_n$ for infinitely many $n$,
%then $\underset{n\to +\infty}\lim d(c_n, O) = 0$ and hence $d(x, O) = 0$ i.e. $x\in O$. If
%$c_n\in h(O_n)$ for infinitely many $n$, then $\underset{n\to +\infty}\lim d(c_n, h(O)) = 0$ and hence $d(x, h(O)) = 0$ i.e. $x\in h(O)$.
%Therefore
%$M\subset O\cup h(O)$. Conversely, let $x\in O\cup h(O)$. If $x\in O$,
%then $\underset{n\to +\infty}\lim d(x, O_n) = 0$. As $\underset{n\to +\infty}\lim d(x_n, M) = 0$, for any $x_n\in M_n$,
%then $\underset{n\to +\infty}\lim d(x, M) = 0$ i.e. $x\in M$. If $x\in h(O)$, then
%$\underset{n\to +\infty}\lim d(x, h(O_n)) = 0$ and therefore $\underset{n\to +\infty}\lim d(x, M) = 0$ i.e.
%$x\in M$. Therefore $O\cup h(O)\subset M$.
%
%Finally as $O\cup h(O)$ is a $G$-orbit, we conclude that $M$ is a finite $G$-orbit.
\bigskip

\textbf{Case 2: $X$ is a graph different from a circle.}
%Write $V(X): = \{v_{1},v_{2},\ldots,v_{n}\}$ and $E(X): =
%\{e_{1},e_{2},\ldots,e_{m}\}$ the set of vertices and the set of
%endpoints of $X$ respectively.
Denote by

%- $\mathcal{I} = \{I_{1},I_{2},\dots,I_{r}\}$ the set of edges in $X$
%having one endpoint in $V(X)$ and the other in $E(X)$, or two distinct endpoints in $V(X)$,

- $\mathcal{C} = \{C_{1},C_{2},\dots,C_{t}\}$ the set of circles in
$X$ containing only one vertex.

%Then $X = (\bigcup_{i=1}^{r}I_{i})\cup(\bigcup_{i=1}^{t}C_{i})$.
\medskip

Denote by $\mathcal{C} = \{C_{1},C_{2},\dots,C_{k}\}$ the set of all the circles in
$X$.
\medskip

%\begin{lem}\label{l49}$($\cite{Ah}, Proposition 3.1$)$ If $X$ is a graph different from a circle and $f: X\rightarrow X$ is a homeomorphism,
%then for every $C\in \mathcal{C}$, $f(C)\in \mathcal{C}$.
%\end{lem}
%\medskip

\begin{lem}\label{l49} If $X$ is a graph different from a circle and $f: X\rightarrow X$ is a homeomorphism,
then for every $C\in \mathcal{C}$, $f(C)\in \mathcal{C}$.
\end{lem}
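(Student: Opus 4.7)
The plan is essentially immediate from the topological definition of a circle recalled in Section 2. First I would recall that by convention, a \emph{circle} is any space homeomorphic to $S^{1}$, so the hypothesis $C\in\mathcal{C}$ means precisely that $C$ is a subspace of $X$ homeomorphic to $S^{1}$, and $\mathcal{C}$ is (by the definition just given) the finite set of all such subspaces of $X$.

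Now since $f\colon X\to X$ is a homeomorphism, the restriction $f|_{C}\colon C\to f(C)$ is a homeomorphism onto its image, so $f(C)$ is homeomorphic to $C$, hence to $S^{1}$; that is, $f(C)$ is a circle. Moreover $f(C)\subseteq f(X)=X$, so $f(C)$ is a circle contained in $X$, and therefore $f(C)\in\mathcal{C}$ by the very definition of $\mathcal{C}$. The hypothesis that $X$ is different from a circle is not used in this argument itself; it is part of the standing setup of Case 2 in the proof of Theorem \ref{t610}, where one ultimately wants to exploit the fact that $f$ permutes the finite set $\mathcal{C}$. No real obstacle is expected: the conclusion is a direct consequence of the fact that being homeomorphic to $S^{1}$ is a topological invariant preserved by the self-homeomorphism $f$ of $X$, combined with $f$ mapping $X$ to itself.
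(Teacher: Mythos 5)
Your argument is correct, and the paper in fact gives no proof of Lemma \ref{l49} at all -- it is stated as immediate, and your reasoning (a self-homeomorphism of $X$ carries any subspace homeomorphic to $S^{1}$ to another such subspace of $X$) is exactly the intended justification. One caveat: the paper defines $\mathcal{C}$ twice, once as the set of \emph{all} circles in $X$ and once as the set of circles containing only one vertex, and it is the latter class that is actually used in Lemma \ref{gr2} and in Case~2 of the proof of Theorem \ref{t610}. Under that reading your proof needs one extra line, namely that $f(B(X))=B(X)$ by Lemma \ref{g1}, so $f(C)$ contains exactly as many vertices as $C$ and hence still lies in $\mathcal{C}$; with that observation added, the argument is complete in either interpretation.
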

\medskip

\begin{lem}\label{gr1}
Let $I$ be an edge in $X$ and let $\textrm{Sat}(I): = \underset{g\in G}\cup g(I)$. Let $p$ be
the number of elements in the family $\{g(I): \ g\in G\} = \{g_0(I),g_1(I),\dots,g_{p-1}(I)\}$,
where $g_{0} = \textrm{id}_X$. Then any finite orbit $O$ meeting the interior int$(I): = I\backslash \gamma(I)$  has cardinal $p$ or $2p$.
Moreover, if there is an orbit $O$ with cardinal $2p$, then there are $p$ elements $h_1,\dots, h_p$ of $G$ such
that for any orbit $M$ with cardinal $2p$, $M\cap g_i(I) = \{x_0(i),x_1(i)\}$, where $x_0(i)\neq x_1(i)$
and $h_i(x_k(i))=x_{k+1 \mod(2)}(i)$, $k=0,1$.
\end{lem}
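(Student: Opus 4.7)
Let $H=\{g\in G: g(I)=I\}$ be the setwise stabiliser of $I$. Since by Lemma \ref{g1} every element of $G$ preserves $B(X)$, it permutes the connected components of $X\setminus B(X)$, so it sends edges to edges. Hence $g_0(I),\dots,g_{p-1}(I)$ are distinct edges of $X$ with pairwise disjoint interiors, and $gH\mapsto g(I)$ gives $[G:H]=p$. Fix $x\in O\cap \mathrm{int}(I)$; because $g(x)\in \mathrm{int}(g(I))$ for every $g\in G$, one has $O\subset\bigsqcup_{i}\mathrm{int}(g_i(I))$. If $g(x)\in I$ then $g(x)\in \mathrm{int}(g(I))\cap I$, which forces $g(I)=I$ by disjointness of interiors of distinct edges. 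This shows $O\cap I=H\cdot x$.

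Let $H^{+}=\{h\in H : h \text{ fixes both endpoints of }I\}$, a subgroup of index at most $2$ in $H$. Identifying $I$ with $[0,1]$, every element of $H^{+}$ is an increasing self-homeomorphism of $[0,1]$, whose only periodic points are its fixed points (the iterates $h^{n}(y)$ are strictly monotone in $n$ as soon as $h(y)\neq y$). Since $O\cap I$ is finite and $H$-invariant, every point of $O\cap I$ is fixed by all of $H^{+}$. For any $h,h'\in H\setminus H^{+}$ one has $h'h^{-1}\in H^{+}$, hence $h'(x)=h(x)$, so $|H\cdot x|\in\{1,2\}$. The $G$-invariance of $O$ yields $O\cap g_i(I)=g_i(O\cap I)$, thus $|O\cap g_i(I)|=|O\cap I|$, and summing over the partition gives $|O|=p\cdot|O\cap I|\in\{p,2p\}$.

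For the \emph{moreover} clause, if some orbit has cardinal $2p$ then $H\setminus H^{+}$ is non-empty; fix once and for all $h\in H\setminus H^{+}$ and define $h_i=g_i h g_i^{-1}\in G$ for $0\leq i<p$. For any orbit $M$ of cardinal $2p$, the analysis above forces $M\cap I=\{y_0,y_1\}$ with $y_0\neq y_1$; the restriction $h|_I$ is a decreasing homeomorphism of $[0,1]$, so it has a unique fixed point in $I$, and consequently must swap the $H$-invariant pair $\{y_0,y_1\}$. Setting $x_k(i)=g_i(y_k)$, a direct computation
\[
h_i(x_k(i))=g_i h g_i^{-1}(g_i(y_k))=g_i h(y_k)=g_i(y_{k+1\bmod 2})=x_{k+1\bmod 2}(i)
\]
verifies the required swap property, and $M\cap g_i(I)=g_i(M\cap I)=\{x_0(i),x_1(i)\}$. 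The only real obstacle is the initial graph-theoretic step of identifying distinct $G$-images of $I$ as edges with disjoint interiors; the remaining argument is just elementary interval dynamics applied to the $H$-action on $I$.
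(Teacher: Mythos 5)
Your proof is correct and follows essentially the same route as the paper's: restrict to the setwise stabiliser of $I$ acting on the arc $I$ to conclude that the orbit meets each edge $g_i(I)$ in one or two points (the interval case), and in the $2p$ case use an orientation-reversing element of the stabiliser, whose unique fixed point forces it to swap the two points of any $2p$-orbit in each edge. You are in fact somewhat more explicit than the paper, which cites Lemma \ref{l31} for the interval step and leaves the final swap as ``easy to check''; the only thing you omit is the paper's side case of a $2p$-orbit contained in the vertex set $\bigcup_i \gamma(g_i(I))$, which is harmless since the statement concerns orbits meeting $\mathrm{int}(I)$.
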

\medskip

\begin{proof}
Let $O$ be a finite orbit with non-empty intersection with int$(I)$. Thus $O\subset
\textrm{int}(\textrm{Sat}(I))$. First observe that the sets $O_i: = O\cap g_i(I)$, for $i=0,\dots,p-1$
have the same number
of points. Moreover, for each $0\leq i \leq p-1$, $O_i:= O\cap I_i$ is in fact a finite orbit
for the action of the subgroup $G_i:= \{g\in G: \ g(I_i) = I_i\}$ on $I_i$. By Lemma \ref{l31},
$O_i$ is a single point or two points. Therefore, the orbit has cardinal $p$ or $2p$.

Now, suppose that there is a finite orbit $O$ in $\textrm{Sat}(I)$ with $2p$ points. If
$O\cap \gamma(I)\neq\emptyset$, then $O\subset \underset{0\leq i \leq p-1}\cup \gamma(g_i(I))$
and in this case the arcs $g_i(I)$ are pairwise disjoint.
Hence for each $i$, $O\cap g_i(I)=\{x_i,y_i\}$ is an orbit for the action of $G_i$ on $g_i(I)$.
The same holds if $O\cap \gamma(I) = \emptyset$, in this case, $O\cap g_i(I) = \{x_i,y_i\}$
is an orbit for the action of $G_i$ on $g_i(I)$. In both cases, for each $i$, there is $h_i\in G_i$
such that $h_i(x_i)=y_i$ and $h_i(y_i)=x_i$. Hence if we denote by $\{a_i,b_i\} = \gamma(I_i)$,
then $h_i(a_i) = b_i$ and $h_i(b_i) = a_i$. So there is a fixed point $c_i\in \textrm{int}([x_i, y_i])\subset g_i(I)$.
So take any orbit $N$ in $Sat(I)$ with $2p$ points. It is easy to check that for each $i$,
$N\cap g_i(I) = \{x_0(i),\ x_1(i)\}$, where $x_0(i)\neq x_1(i)$ and $h_i(x_k(i)) = x_{k+1 \mod(2)}(i)$,
$k=0,1$.
\end{proof}

When replacing the edge $I$ by a circle $C$
containing only one vertex, the following lemma is similar to the lemma above.

\begin{lem}\label{gr2}
Let $C$ be a circle in $X$ containing only one vertex and let $\textrm{Sat}(C) =
\underset{g\in G}\cup g(C)$. Let $p$ be the number of elements in the family
$\{g(C): \ g\in G\} = \{g_0(C),g_1(C),\dots,g_{p-1}(C)\}$, where $g_0 = \mathrm{id}_X$.
Then any finite orbit $O$ with non-empty intersection with $\mathrm{int}(C)$ has cardinal $p$ or $2p$.
Moreover, if there is an orbit with cardinal $2p$, then there are $p$ elements $h_1,\dots, h_p$
of $G$ such that for any orbit $M$ with cardinal $2p$, $M\cap g_i(I) = \{x_0(i),\ x_1(i)\}$,
where $x_0(i)\neq x_1(i)$ and $h_i(x_k(i)) = x_{k+1 \mod(2)}(i)$, $k=0,1$.
\end{lem}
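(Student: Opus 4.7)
My plan is to mimic the proof of Lemma \ref{gr1}, where each circle $g_i(C)$ takes the place of the edge $g_i(I)$ and the unique vertex $v_i \in g_i(C)$ plays the role of the two endpoints $\{a_i,b_i\}$ of $g_i(I)$. The key observation is that, by Lemma \ref{l49}, each $g_i(C)$ lies in $\mathcal{C}$, so it contains exactly one vertex $v_i$ of $X$; and by Lemma \ref{g1}(i), the setwise stabilizer $G_i := \{g \in G : g(g_i(C)) = g_i(C)\}$ permutes the vertex set of $X$ and therefore must fix this unique vertex $v_i$.

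First I would reduce the action of $G_i$ on $g_i(C)$ to an interval action. Fix a parametrization $\phi_i : [0,1] \to g_i(C)$ injective on $[0,1)$ with $\phi_i(0) = \phi_i(1) = v_i$. For each $h \in G_i$, the lift $\tilde{h} := \phi_i^{-1} \circ h \circ \phi_i$ is a homeomorphism of $(0,1)$, hence monotone, and so extends uniquely to a homeomorphism of $[0,1]$ preserving $\{0,1\}$; this yields an action of $G_i$ on $[0,1]$. Since any finite orbit is a minimal set, Lemma \ref{l31} applied to this lifted action forces every finite $G_i$-orbit inside $g_i(C) \setminus \{v_i\}$ to have $1$ or $2$ points. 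Next, by Lemma \ref{g1}, a $G$-orbit meeting $\mathrm{int}(C)$ contains no vertex, so $O \cap g_i(C) \subset g_i(C)\setminus\{v_i\}$ for every $i$; and since $G$ permutes the family $\{g_i(C)\}$ transitively, all slices $O \cap g_i(C)$ share a common cardinality, which by the previous step is $1$ or $2$. Combining these facts yields $|O| \in \{p, 2p\}$.

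The ``moreover'' part runs parallel to the second half of Lemma \ref{gr1}'s proof. If $|O| = 2p$, then in each slice $O \cap g_i(C) = \{x_i, y_i\}$ one selects $h_i \in G_i$ interchanging $x_i$ and $y_i$; transported through $\phi_i$, the element $\tilde{h}_i$ is then an orientation-reversing homeomorphism of $[0,1]$ swapping two interior points. The step I expect to be the main obstacle is precisely the last uniformity claim --- that this fixed element $h_i$ must also interchange the two points of $N \cap g_i(C)$ for every other $2p$-orbit $N$. However, once the $\phi_i$-reduction is in place the situation matches exactly that of Lemma \ref{gr1} on the closed interval $[0,1]$, so the interior-fixed-point argument used there transfers verbatim and closes the proof.
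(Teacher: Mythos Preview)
Your proposal is correct. The first part is handled just as the paper does (it simply says ``similar steps as in Lemma \ref{gr1}''), and your lifting $\phi_i$ makes this reduction explicit: since the unique vertex $v_i$ is fixed by $G_i$, cutting $g_i(C)$ at $v_i$ turns the $G_i$-action into an honest interval action, to which Lemma \ref{l31} applies.

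For the ``moreover'' clause you take a genuinely different route from the paper. You keep the lifted picture and argue on $[0,1]$: the chosen $h_i$ lifts to an orientation-reversing $\tilde h_i$ of $[0,1]$ (because it swaps two interior points), hence has a \emph{single} interior fixed point, and any other size-$2$ $G_i$-orbit---being $\tilde h_i$-invariant and disjoint from that fixed point---must be swapped. The paper instead works intrinsically on the circle: it splits $g_i(C)$ into the two arcs $A_i,B_i$ with endpoints $x_i,y_i$, uses the fixed vertex $c_i\in A_i$ to see $h_i(A_i)=A_i$, obtains a second fixed point $w_i\in B_i$, and then checks that the two points of any other $2p$-orbit fall into the same arc and are interchanged there. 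The two arguments are equivalent under your parametrisation (your unique fixed point $\tilde c_i$ is the lift of the paper's $w_i$, while the vertex corresponds to the swapped pair $\{0,1\}$), but your reduction has the advantage that the uniformity of $h_i$ over all $2p$-orbits really does follow verbatim from the interval case, with no separate two-arc analysis needed. The paper's direct argument, on the other hand, avoids introducing the auxiliary parametrisation and keeps the geometry of the circle visible.
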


\begin{proof}
Similar steps as in Lemma \ref{gr1} will be repeated to prove the first part of the Lemma. So let us prove the last part which is slightly different.

Suppose that $M$ is an orbit with $2p$ points in $\overline{C}$. Then $M$ intersects each $g_i(C)$ exactly in two 
points $x_i,y_i$ which form an orbit for the subgroup $G_i$ defined similarly as in Lemma \ref{gr1}. In the circle 
$C_i$, there is two arcs $A_i$ and $B_i$ joining $x_i$ and $y_i$. Suppose that $A_i$ contains the only vertex in $C_i$, 
namely $c_i$. Obviously, $c_i$ is fixed by any element in $G_i$. So take $h_i$ any element in $G_i$ such that 
$h_i(x_i)=y_i$, hence $h_i(A_i)=A_i$ and so $h_i(y_i)=x_i$. Also, we get that $h_i(B_i)=B_i$ and there is a point $w_i$
in the interior of $B_i$ fixed by $h_i$. Now take any orbit $N$ in $\overline{C}$ with $2p$ points. 
Then for each $i$, $N$ intersects $g_i(C)$ exactly in two points which both belong either to $A_i$ or $B_i$ 
and in either cases it is easy to check that they are permuted by $h_i$.
\end{proof}
\medskip

We continuous the proof of Theorem \ref{t610} as follows:
\medskip

If $M_n$ intersects $B(X)\cup E(X)$ for infinitely many $n$, then for infinitely many $n$, $M_n$
is the same orbit $O$ hence the limit of $(M_n)_n$ is $O$ which is a finite orbit.

If $M_n \cap B(X)\cup E(X) = \emptyset$ for $n$ large enough, the we can assume without loss of
generality that $M_n \cap B(X)\cup E(X) = \emptyset$ for each $n$. We distinguish two subcases:

\textit{Case 1.} For infinitely many, $M_n$ intersects the interior of an edge $I$:
Again without loss of generality, we can assume that this assertion holds for each $n$.

In this case, by applying Lemma \ref{gr1}, there is a number $p$ such that for each $n$, $M_n$ has
$p$ or $2p$ elements. If for a subsequence, $\textrm{card}(M_{\phi(n)})=p$, then
let $g_0,\dots,g_{p-1}$ be defined as in Lemma \ref{gr1}, and so for each $n$,
$M_{\phi(n)} = \{g_0(x^n),\dots,g_{p-1}(x^n)\}$. One can assume that $x^n$ converges to $x$.
Thus, $M:=\{x,g_1(x),\dots, g_{p-1}(x)\}$ is the limit in the sense of Hausdorff of
$(M_{\phi(n)})_n$. As for each $n$, $M_n$ is minimal, then its Hausdorff limit is closed and
$G$-invariant. Thus $M:=\{x,g_1(x),\dots, g_{p-1}(x)\}$ is closed and $G$-invariant and
hence it is a finite orbit.

If now for $n$ large enough, $\textrm{card}(M_{n})=2p$, from Lemma \ref{gr1}, there are
$g_0,\dots, g_{p-1}, h_0,\dots,h_{p-1}\in G$ such that for $n$ large enough, the orbit $M_n$
can be written as follows: $$M_n = \{x^n,g_1(x^n),\dots,g_{p-1}(x^n)\}\cup\{y^n,g_1(y^n),\dots,
g_{p-1}(y^n)\},$$ where for each $i$, $h_i(g_i(x^n)) = g_i(y^n)$ and $h_i(g_i(y^n)) = g_i(x^n)$.
By taking a suitable subsequence, we can assume that $x^n$ and $y^n$ converge to $x$ and $y$,
respectively in $X$. Similarly as above, we prove that $M$ (which is the Hausdorff limit of $M_n$)
is a finite orbit.
\medskip

\textit{Case 2.} For infinitely many $n$, $M_n$ intersects a circle $C$ which contains only one vertex. In this case, we will use Lemma \ref{gr2} and follow the same steps as in Case 1.
\medskip

\textbf{Case 3: $X$ is a local dendrite.} We distinguish two cases:
\medskip

\textit{\it (c-1)}: $M_{n}\cap Y\neq\emptyset$ for infinitely many integer $n$.
Without loss of generality, we assume that $M_{n}\cap Y\neq\emptyset$ for any $n\in \mathbb{N}$.
Since $M_{n}$ is a minimal set of $G$, so $M_{n}\subset Y$ for any $n\in \mathbb{N}$ and hence
$M\subset Y$. By the case 2, $M$ is a minimal set of $G$.
\medskip

\textit{\it (c-2)}: $M_{n}\cap Y = \emptyset$ for $n$ large enough. Without loss of generality,
one can assume that $M_{n}\cap Y=\emptyset$, for any $n\in \mathbb{N}$.
By collapsing $Y$ to a point $\gamma$, $\widehat{X}$ is a tree and $M_{n}$ is a minimal set in $\widehat{X}$
for $\widehat{G}$ for any $n\in \mathbb{N}$; indeed, if $x\in M_{n}$, then $\overline{G(x)} = M_{n}$ and thus $G(x)\cap Y = \emptyset$.
Hence $\pi(g(x)) = g(x)$ for all $g\in G$ and so $\overline{\widehat{G}(x)} = M_{n}$.
By the case 2, $M$
is a minimal set of $\widehat{G}$. As $\pi$ is continuous,
then $\pi(M)=M$. Let us show that $M$ is minimal set of $G$:

- If $M\cap Y\neq\emptyset$ then $\gamma\in M$ since
$\pi(M\cap Y) = \{\gamma\}\subset \pi(M)=M$. As $M$ is a minimal set of $\widehat{G}$ and $\gamma$ is a
fixed point of $\widehat{G}$ (since $g(\{\gamma\}) = g(\pi(Y)) = \pi(g(Y)) = \pi(Y) = \{\gamma\}$
for all $g\in G$), then $M = \{\gamma\}$. So $\gamma$ is a fixed point of $G$ (since $g(M) = M)$,
for all $g\in G$) and hence $M$ is a minimal set of $G$.

- If $M\cap Y = \emptyset$ then for any $x\in M$, $\overline{\widehat{G}(x)} = \overline{G}(x) = M$.
This proves that $M$ is a minimal
set of $G$. \qed
\medskip

\begin{cor}\label{c6100} Let $G$ be a group acting on a local dendrite $X$. Then the union of all minimal sets of
$G$ is closed in $X$.
\end{cor}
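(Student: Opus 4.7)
The plan is to deduce the corollary directly from Theorem \ref{t610} together with elementary properties of the Hausdorff metric. Write $U = \bigcup\{M : M \text{ is a minimal set of } G\}$; the goal is to show $\overline{U} \subseteq U$.

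First I would pick an arbitrary $x \in \overline{U}$ and choose a sequence $(x_n)_{n\in\mathbb{N}}$ in $U$ with $x_n \to x$ in $(X,d)$. By definition of $U$, for each $n$ there is a minimal set $M_n$ of $G$ with $x_n \in M_n$. Since $2^X$ equipped with the Hausdorff metric is a compact metric space, the sequence $(M_n)_n$ has a subsequence $(M_{\phi(n)})_n$ converging in $d_H$ to some closed subset $M$ of $X$.

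Next I would invoke Theorem \ref{t610}: the set of all minimal sets of $G$ is closed (indeed compact) in $(2^X, d_H)$, so the Hausdorff limit $M$ is itself a minimal set of $G$, and therefore $M \subseteq U$. It remains to check that $x \in M$. This is a standard fact about Hausdorff convergence: if $y_n \in A_n$ and $A_n \to A$ in $d_H$ with $y_n \to y$, then $y \in A$, because $d(y_n, A) \le d_H(A_n, A) \to 0$ and $A$ is closed. Applying this to $x_{\phi(n)} \in M_{\phi(n)}$ with $x_{\phi(n)} \to x$ and $M_{\phi(n)} \to M$ gives $x \in M \subseteq U$.

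There is no real obstacle here; the only subtlety is that Theorem \ref{t610} was stated as ``the set of all minimal sets is compact,'' which I read as meaning closed in the compact space $2^X$, so Hausdorff limits of minimal sets are minimal. If that reading were in doubt, one could argue more directly: the Hausdorff limit $M$ of $G$-invariant closed sets is closed and $G$-invariant (since each $g \in G$ acts as a homeomorphism and thus continuously on $(2^X,d_H)$ via $2^g$), and Theorem \ref{t610} then ensures $M$ contains no proper minimal subset other than itself, i.e.\ $M$ is minimal. Either way, the conclusion $\overline{U} \subseteq U$ follows.
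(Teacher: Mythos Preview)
Your proof is correct and is precisely the intended deduction from Theorem~\ref{t610}; the paper states the corollary without proof, treating it as an immediate consequence, and your argument (pass to a Hausdorff-convergent subsequence of the $M_n$, use Theorem~\ref{t610} to conclude the limit is minimal, and use the elementary fact $d(x_{\phi(n)},M)\le d_H(M_{\phi(n)},M)\to 0$ to place $x$ in $M$) is exactly how one fills in that step. The only quibble is that your final parenthetical ``alternative'' reading is unnecessary and slightly circular---compactness in $(2^X,d_H)$ already gives closedness since $2^X$ is Hausdorff---so you can safely drop that paragraph.
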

\medskip

\section{\bf Almost periodicity, orbit closure relation and orbit space}

Let $X$ be a compact metric space with metric $d$ and an action of a group $G$ on $X$. First, we recall a basic notion needed later.
A subset $S$ of $G$ is called \textit{syndetic} if there exists a finite subset $F$ of $G$ such that for each element
 $g$ in $G$, there is some element $k\in F$ with $kg\in S$; that is $G = FS$.
%A point $x$ of $X$ is said to be \textit{periodic} for $G$ if $G(x)$ is finite.

\begin{defn} \label{d51} \rm{A point $x\in X$ is said to be almost periodic for $G$ if for any neighborhood $U$ of $x$, the set of
return times
$N(x, U) = \{g\in G: gx\in U\}$ is syndetic in $G$.
We say that:

($G, X$) is \textit{pointwise almost periodic} if every point of $X$ is almost periodic.

($G, X$) is \textit{pointwise periodic} if every point of $X$ has finite orbit.

%($G, X$) is equicontinuous if for all $x\in X$ and $\varepsilon>0$, there exists $\eta>0$ such that for every $g\in G$, $y\in X$,
%$d(x,y)< \eta \Rightarrow d(gx, gy)< \varepsilon$.
}
\end{defn}

We have the following characterization of almost periodicity via minimality:

\begin{prop}\label{p51n} $($\cite{go}$)$. Let $X$ be a compact metric space. Then a point $x$
in $X$ is almost periodic for $G$ if and only if $\overline{G(x)}$ is a minimal set for $G$.
\end{prop}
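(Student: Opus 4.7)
The plan is to establish the two implications separately, relying only on minimality/orbit closure arguments and the fact that $X$ is compact metric (so orbit closures are compact). No special structure of local dendrites is needed here; this is a general topological dynamics fact.

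For the forward direction, assume $x$ is almost periodic and take $y\in \overline{G(x)}$. I will show $x\in \overline{G(y)}$; combined with $\overline{G(y)}\subseteq \overline{G(x)}$, this forces $\overline{G(y)}=\overline{G(x)}$ for every $y$ in the orbit closure, hence $\overline{G(x)}$ is minimal. Fix $\varepsilon>0$ and set $U=B(x,\varepsilon/2)$. By almost periodicity, $N(x,U)$ is syndetic, so there is a finite set $F=\{f_{1},\dots,f_{k}\}\subset G$ with $G=F\,N(x,U)$. Using continuity of each of the finitely many homeomorphisms $f_{i}^{-1}$, pick $\delta>0$ such that $d(z,y)<\delta$ implies $d(f_{i}^{-1}z,f_{i}^{-1}y)<\varepsilon/2$ for all $i$. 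Since $y\in \overline{G(x)}$, choose $g\in G$ with $d(gx,y)<\delta$, and write $g=f_{i}s$ with $s\in N(x,U)$. Then $d(f_{i}^{-1}gx,x)=d(sx,x)<\varepsilon/2$ and $d(f_{i}^{-1}gx,f_{i}^{-1}y)<\varepsilon/2$, so by the triangle inequality $d(f_{i}^{-1}y,x)<\varepsilon$. As $\varepsilon$ was arbitrary, $x\in \overline{G(y)}$.

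For the converse, assume $M:=\overline{G(x)}$ is minimal; let $U$ be any open neighborhood of $x$ in $X$ and set $V=U\cap M$. By minimality, for each $y\in M$ one has $\overline{G(y)}=M$, so in particular $x\in \overline{G(y)}$, i.e.\ there exists $g\in G$ with $gy\in V$; equivalently $y\in g^{-1}V$. Hence $M\subseteq \bigcup_{g\in G} g^{-1}V$. Since $M$ is compact, a finite subcollection suffices: $M\subseteq \bigcup_{i=1}^{k} g_{i}^{-1}V$. Now for any $h\in G$ the point $hx$ lies in $M$, so $hx\in g_{i}^{-1}V$ for some $i$, which means $g_{i}h\in N(x,U)$, whence $h\in g_{i}^{-1}N(x,U)$. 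Taking $F=\{g_{1}^{-1},\dots,g_{k}^{-1}\}$ gives $G=F\,N(x,U)$, proving that $N(x,U)$ is syndetic; therefore $x$ is almost periodic.

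The only subtlety I anticipate is the asymmetry between ``$x$ in the orbit closure of $y$'' (what we want to deduce) and ``$y$ in the orbit closure of $x$'' (what we assume), which is why the forward direction needs the continuity step to ``pull'' an approximation of $y$ by $gx$ back to an approximation of $x$ by $f_{i}^{-1}y$. Because this pull-back only ever involves the \emph{finitely many} homeomorphisms $f_{i}^{-1}$ coming from the syndetic decomposition, no uniform continuity across the whole group $G$ is required, and the argument goes through in any compact metric $G$-space.
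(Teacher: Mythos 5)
Your proof is correct. The paper gives no argument for this proposition at all---it is simply quoted from Gottschalk's 1946 paper---so there is nothing to compare against; what you have written is the standard self-contained proof, and both directions are sound: the forward direction correctly exploits that only the finitely many maps $f_i^{-1}$ from the syndetic decomposition need to be controlled (so plain continuity suffices, no uniformity over $G$), and the converse correctly combines minimality with compactness of $M=\overline{G(x)}$ to extract the finite set witnessing syndeticity. The only cosmetic remark is that the paper's definition of syndetic asks for $k\in F$ with $kg\in S$ (i.e.\ $G=F^{-1}S$ rather than literally $G=FS$); since $F$ is finite this is immaterial, and your choices of $F$ in the two directions are consistent with either convention.
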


\begin{defn} \label{d52} \rm{We call the \textit{orbit closure relation} the set $R = \{(x, y)\in X\times X:
y\in \overline{G(x)}\}$. }
 \end{defn}

When $X$ is a circle, we have the following proposition.

\begin{prop} If $X$ is a circle, then ($G, X$) is pointwise almost periodic
if and only if either ($G, X$) is minimal or every non-endpoint of $X$ has finite orbit.
\end{prop}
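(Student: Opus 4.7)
The plan is to prove both implications by combining Proposition \ref{p51n} (almost periodicity is equivalent to the orbit closure being minimal) with Corollary \ref{c33} (the classification of minimal sets on $S^{1}$). Note first that a circle has no endpoints in the graph-theoretic sense: every point of $S^{1}$ has order two. Hence the phrase ``every non-endpoint of $X$ has finite orbit'' is simply ``every point of $X$ has finite orbit'' in this setting.

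For the direction $(\Leftarrow)$, I would handle the two stated alternatives separately. If $(G,X)$ is minimal, then $\overline{G(x)} = X$ is a minimal set for every $x$, so Proposition \ref{p51n} immediately gives that every point is almost periodic. If instead every $x \in X$ has a finite orbit, then $\overline{G(x)} = G(x)$ is a finite $G$-invariant set, hence a minimal set, and Proposition \ref{p51n} again furnishes almost periodicity.

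For the direction $(\Rightarrow)$, I assume $(G,X)$ is pointwise almost periodic but not minimal, and I aim to show that every orbit is finite. By Proposition \ref{p51n}, for each $x \in X$ the closure $\overline{G(x)}$ is a minimal set, so by Corollary \ref{c33} it is one of: (i) a union of at most two finite $G^{+}$-orbits, (ii) the whole circle $S^{1}$, or (iii) a Cantor set. Case (ii) is excluded by the non-minimality hypothesis (it would give $\overline{G(x)} = X$ for some $x$ and hence $(G,X)$ would be minimal in the alternative (ii) of Corollary \ref{c33}).

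The main obstacle is ruling out case (iii). Suppose for contradiction that some minimal set $C \subset X$ is a Cantor set. By the second half of Corollary \ref{c33}(iii), $C$ is contained in the closure of every $G$-orbit and is the unique Cantor minimal set. Since $C$ is nowhere dense and a proper subset of $X$, I can pick $x \in X \setminus C$. By pointwise almost periodicity, $\overline{G(x)}$ is itself a minimal set, and it contains the minimal set $C$; minimality of $\overline{G(x)}$ forces $\overline{G(x)} = C$. But then $x \in \overline{G(x)} = C$, contradicting the choice of $x$. Thus every minimal set is a finite orbit, and by Proposition \ref{p51n} every point has finite orbit, completing the proof. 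Everything outside of this Cantor exclusion is routine bookkeeping from the two quoted results.
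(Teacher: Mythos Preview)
Your argument is correct. The paper actually states this proposition without proof, so there is nothing to compare against; your write-up would serve perfectly well as the missing justification. The key step---ruling out a Cantor minimal set by invoking the uniqueness clause of Corollary~\ref{c33}(iii) and then exhibiting a point outside it whose (minimal) orbit closure would be forced to coincide with it---is exactly the right idea, and the remaining cases are handled cleanly via Proposition~\ref{p51n}.

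One cosmetic remark: in the $(\Leftarrow)$ direction you write that a finite $G$-invariant set is ``hence a minimal set''. Strictly speaking a finite $G$-invariant set need not be minimal (it could be a union of several orbits); what makes $G(x)$ minimal is that it is a single orbit which equals its own closure. This is clearly what you mean, so it is only a matter of phrasing.
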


\begin{thm}\label{t56} Let ($G, X$) be a group action, where $X$ is a local dendrite and $G$ is a group.
Then
\begin{enumerate}
 \item The following properties are equivalent:
\begin{itemize}
 \item[(i)] ($G, X$) is pointwise almost periodic;
 \item[(ii)] The orbit closure relation $R$ is closed.
 % \item[(iii)]($G, X$) is equicontinuous.

 \end{itemize}
\item If $X$ is different from a circle, then the following are equivalent:
\begin{itemize}
 \item[(i)] ($G, X$) is pointwise almost periodic;
 \item[(ii)] Every non-endpoint of $X$ has finite orbit.
 %\item[(iii)]($G, X$) is equicontinuous.
\end{itemize}
\end{enumerate}
 \end{thm}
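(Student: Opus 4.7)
The plan is to handle Part (1) first, using Proposition \ref{p51n} to identify almost periodicity of a point with minimality of its orbit closure, and then to bootstrap Part (2) off of Part (1).

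For Part (1), implication (i)$\Rightarrow$(ii): suppose $(x_n,y_n)\in R$ converges to $(x,y)$. By (i) and Proposition \ref{p51n}, each $M_n:=\overline{G(x_n)}$ is a minimal set of $G$, and $y_n\in M_n$. Theorem \ref{t610} lets me pass to a Hausdorff-convergent subsequence $M_{n_k}\to M$ with $M$ minimal; since $x_n,y_n\in M_n$ and converge to $x,y$ respectively, both $x,y\in M$, so $y\in M=\overline{G(x)}$ by minimality. For (ii)$\Rightarrow$(i), fix $x\in X$ and let $M\subset\overline{G(x)}$ be any minimal subset. For $z\in M$, choose $g_n\in G$ with $g_n x\to z$. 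Since $x\in\overline{G(g_n x)}=\overline{G(x)}$ we have $(g_n x,x)\in R$, and closedness of $R$ gives $(z,x)\in R$, i.e., $x\in\overline{G(z)}=M$. Hence $\overline{G(x)}=M$ is minimal and $x$ is almost periodic.

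For Part (2), direction (i)$\Rightarrow$(ii): let $x$ be a non-endpoint. Then $M:=\overline{G(x)}$ is minimal, and by Theorems \ref{tr33} and \ref{tr37} it is either a finite orbit, a Cantor set, or a circle; I rule out the last two. Suppose $M$ is a circle; then $M\subset Y$, the invariant graph containing all circles of $X$. If $Y$ is a graph different from a circle, $B(Y)$ is a non-empty finite $G$-invariant subset of $Y$, and Theorem \ref{tr1} applied to the action on $Y$ forces $M$ finite, a contradiction. If $Y=S^{1}$, the assumption $X\neq S^{1}$ supplies a connected component $C$ of $X\setminus Y$ with $\overline{C}\cap Y$ a single point $z_0$; Lemma \ref{l2} shows that only finitely many components of $X\setminus Y$ have diameter above any fixed threshold, so $G$ permutes the attachment points level by level and $z_0$ has a finite orbit in $Y$. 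Corollary \ref{c100} then forces $M$ finite, a contradiction. The Cantor case with $M\cap Y\neq\emptyset$ reduces identically, since minimality gives $M\subset Y$. If finally $M$ is Cantor and disjoint from $Y$, collapse $Y$ to a point $\gamma$: the quotient action $(\widehat G,\widehat X)$ has $\widehat X$ a dendrite with $\gamma$ a global fixed point, and $M$ embeds as a Cantor minimal set of $\widehat G$. Proposition \ref{pr7} then places $M\subset E(\widehat X)$; since $\widehat X$ coincides with $X$ in a neighborhood of $x$ (because $x\notin Y$), it follows that $x\in E(X)$, contradicting the choice of $x$.

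For Part (2), direction (ii)$\Rightarrow$(i): by Part (1) it suffices to verify that $R$ is closed. Suppose $(x_n,y_n)\to(x,y)$ with $y_n\in\overline{G(x_n)}$. By hypothesis (ii), any $w\in X$ that is not an endpoint has $\overline{G(w)}=G(w)$ finite and minimal. For each $n$, pick a minimal subset $N_n\subset\overline{G(x_n)}$ with $y_n$ at Hausdorff distance at most $1/n$ from $N_n$; using Theorem \ref{t610} and passing to a subsequence, $N_{n_k}$ converges in Hausdorff to a minimal set $N$ with $y\in N$. A parallel application of (ii) and Lemma \ref{g1} (invariance of $E(X)$ and $B(X)$) shows that $x$ also lies in a minimal orbit closure obtained as a Hausdorff limit of the $\overline{G(x_{n_k})}$, and a structural argument distinguishing $x$ endpoint/non-endpoint (and using that no minimal subset of $\overline{G(x)}$ can contain a non-endpoint unless it equals the whole orbit closure, by (ii)) identifies these two limits, giving $y\in N=\overline{G(x)}$.

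The main obstacle is the Cantor subcase $M\cap Y=\emptyset$ in (i)$\Rightarrow$(ii) of Part (2): one must genuinely pass to the quotient dendrite $\widehat X$ with its global $\widehat G$-fixed point $\gamma$ and use the rigidity statement that Cantor minimal sets for dendrite actions with a global fixed point are confined to the endpoint set. A secondary delicate point is the converse direction, where (ii) a priori says nothing about endpoints, so one must propagate the finite-orbit information from the non-endpoints to rule out pathological Hausdorff limits of orbit closures sitting in $\overline{E(X)}\setminus E(X)$; this is precisely where Theorem \ref{t610} and the interaction of (ii) with Theorems \ref{tr33}/\ref{tr37} are essential.
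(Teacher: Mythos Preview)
Your Part~(1) is correct and essentially matches the paper: the paper cites \cite{Aus} for (ii)$\Rightarrow$(i), but your direct argument via a minimal $M\subset\overline{G(x)}$ is the standard one, and your (i)$\Rightarrow$(ii) via Theorem~\ref{t610} is exactly what the paper does.

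Part~(2), however, has real gaps in both directions.

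For (i)$\Rightarrow$(ii), two steps fail. First, when $Y=S^{1}$ you claim the attachment point $z_0$ has finite orbit because ``$G$ permutes the attachment points level by level'' via Lemma~\ref{l2}. But $G$ acts by homeomorphisms, not isometries: the components $g(C)$ in the $G$-orbit of a fixed component $C$ are pairwise homeomorphic yet may have arbitrarily small diameters, so Lemma~\ref{l2} yields no finiteness, and nothing prevents $G(z_0)$ from being infinite (even dense) in $Y$. Second, in the Cantor subcase $M\cap Y=\emptyset$, Proposition~\ref{pr7} says only that $M$ is finite or Cantor; it does \emph{not} assert $M\subset E(\widehat X)$, and you have no hypothesis (such as $E(\widehat X)$ closed, cf.\ Theorem~\ref{tr33}(ii)) that would give this. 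The ``rigidity statement'' you invoke in the last paragraph is not in the paper and is not Proposition~\ref{pr7}. The paper takes a different route here: working in the quotient dendrite $\widehat X$ with global fixed point $\gamma$, it uses \cite{MN1}, Theorem~5.4 (a Cantor minimal set $N$ is the \emph{unique} infinite minimal set inside its convex hull $[N]$) to locate, on the side of $x$ away from $\gamma$, a point $y$ with finite $G$-orbit; then $g(x)\in[\gamma,g(y)]$ for every $g$, so $Gx$ lies in the finite tree $\bigcup_{g}[\gamma,g(y)]$, and Theorem~\ref{tr1} forces $M$ finite. Points of $Y$ are handled afterward by enlarging $Y$ to an invariant graph using one such finite orbit in $X\setminus Y$.

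For (ii)$\Rightarrow$(i), your argument is incomplete. The sets $N_n\subset\overline{G(x_n)}$ with $d_H(y_n,N_n)\le 1/n$ need not exist when $x_n\in E(X)$: hypothesis (ii) says nothing about endpoints, so $\overline{G(x_n)}$ may fail to be minimal and $y_n$ may sit far from every minimal subset of it. The concluding ``structural argument'' is only sketched. The paper's proof here is a single line and avoids all of this: non-endpoints are dense and, by (ii), each is its own finite minimal orbit; by Corollary~\ref{c6100} the union of all minimal sets is closed, hence equals $X$, so every point lies in a minimal set and is therefore almost periodic by Proposition~\ref{p51n}.
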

\medskip

\begin{proof} (1) $(ii)\Rightarrow (i)$:  see  (\cite{Aus}, Proposition 1.1).
%Assume that $R$ is closed and let $x, y\in X$ such that $y\in \overline{Gx}$.
%Suppose that $x\notin \overline{Gy}$, then $(y,x)\notin R$ and so there is two neighborhoods $U$ and $V$ of $x$ and $y$ in $X$ respectively
%such that $(U\times V) \cap R = \emptyset$. As $V\cap Gx\neq \emptyset$, there is $g\in G$ such that $(x, gx)\in (U\times V)\cap R$, a contradiction.
%We conclude that $x\in \overline{Gy}$ and hence $\overline{Gx}$ is a minimal set of $G$.
$(i)\Rightarrow (ii)$: Assume that ($G, X$) is pointwise almost periodic and let
$((x_n, y_n))_{n\in \mathbb{N}}\in X\times X$ such that $y_n\in \overline{G(x_n)}$ with
$\underset{n\to +\infty}\lim
d(x_n, x) = 0$, $\underset{n\to +\infty}\lim
d(y_n, y) =0$. Without loss of generality,
we assume that $\overline{G(x_n)}$ converges in Hausdorff dimension to a closed subset $M$ of $X$.
As $\overline{G(x_n)}$
is a minimal set for $G$, then by Theorem \ref{t610}, $M$ is a minimal set of $G$. Then we have
$\underset{n\to +\infty}\lim
d(x_n, M) =0$ and $\underset{n\to +\infty}\lim
d(y_n, M) =0$. It follows that $x, y\in M$ and therefore $M = \overline{G(x)}=\overline{G(y)}$. In particular,
$y\in \overline{G(x)}$ and so $R$ is closed. %$(ii) \Leftrightarrow (iii)$: This follows from (\cite{Aus1}, Exercise 6, p. 46).

(2) $(ii)\Rightarrow (i)$: Assume that the orbit of every non-endpoint is finite. Since the set of non-endpoints of $X$ is dense in $X$, it follows by Corollary \ref{c6100},
that every endpoint of $X$ belongs
to a minimal set of $G$. Therefore assertion (i) follows.
$(i)\Rightarrow (ii)$:  Assume that ($G, X$) is pointwise almost periodic and let $x$ be a non-endpoint of $X$ that belong to $X\setminus Y$. Denote by $M = \overline{Gx}$.
%and so by Theorem \ref{tr1}, $M$ is finite when $Y$ is not a circle. If  $M\cap Y = \emptyset$,
Then $M\subset X\setminus Y$. By collapsing the graph $Y$ to a point $\gamma$, we get a
new dendrite $D$ and a new action on $D$ having $\gamma$ as a global fixed point. It follows that $M$ is either a finite orbit, or a Cantor set.
Let us prove that $M$ is finite.

\textit{Claim}. There exists a point $y\in D$ such that $Gy$ is finite and $x\in [y, a]$.

Indeed, there exists a point $z\in (D\backslash (M\cup E(X)))\cap C$, where $C$ is the connected component of $D\backslash \{x\}$ that does not contain $a$.
Set $N: = \overline{Gz}$. If $N$ is a finite orbit, the claim follows. Now assume that $N$ is a Cantor set, then by (\cite{MN1}, Theorem 5.4),
$N$ is the only infinite minimal subset in $[N]$, where
$[N]$ is the convex hull of $N$. So the orbit of every point in $[N]\backslash N$ is finite.
As $([N]\backslash N)\cap C\neq \emptyset$, the claim follows.

Now, by the claim, we have that $Gx\subset T$, where $T$ is a tree (since for any $g\in G$, $g(x)\in [a, g(y)]$ and $Gy$ is finite).
Therefore $M$ is a finite orbit by Theorem \ref{tr1}.

Now let us prove that the orbit of every point in $Y$ is finite.
Let $a$ be non-endpoint such that $a\in X\backslash Y$ and let $\{b\} = C\cup Y$, where $C$ is the connected component of
$X\setminus Y$ that contains $a$. Denote by $J = [a, b]$ the only arc in $X$ joining $a$ and $b$. Notice that the orbit of $Gb\subset Y$.
Thus, since $Ga$ is finite, on can deduce easily that $Z: = Y\cup \underset{g\in G}\cup g(I)$ is an invariant graph that contains
$Ga$. Therefore, by Theorem \ref{tr1}, all points in $Z$ (and in particular in $Y$) have finite orbits.
This proves the assertion (ii).
\end{proof}
\medskip

%\begin{cor}\label{c59} Let $($G, X$)$ be a group action, where $X$ is a local dendrite and $G$ is a
%group. If ($G, X$) is pointwise periodic, then it is equicontinuous.
%\end{cor}
%\medskip

When $E(X)$ is countable, %the converse is also true.
we have the following corollary.

\begin{cor}\label{c63} Let $($G, X$)$ be a group action, where $X$ is a local dendrite different from a
circle and $G$ is a group. Assume that $E(X)$ is countable. Then the following properties are equivalent:
\begin{itemize}
\item [(1)] ($G, X$) is pointwise almost periodic;
\item [(2)] ($G, X$) is pointwise periodic;
%\item[(3)] ($G, X$) is equicontinuous.
\end{itemize}
 \end{cor}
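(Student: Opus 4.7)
The plan is to prove the two implications separately; the real content is the $(1)\Rightarrow(2)$ direction. For $(2)\Rightarrow(1)$ I would argue immediately: if every orbit $G(x)$ is finite, then $\overline{G(x)}=G(x)$ is closed and $G$-invariant with no proper closed $G$-invariant subset, hence minimal; Proposition \ref{p51n} then delivers almost periodicity at every point.

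For $(1)\Rightarrow(2)$, my approach first invokes Theorem \ref{t56}(2): since $X$ is a local dendrite different from a circle, pointwise almost periodicity already forces every non-endpoint of $X$ to have finite orbit, so the remaining task is to show that every endpoint also has finite orbit. Fix $e\in E(X)$. By Lemma \ref{g1}(ii) the orbit $G(e)$ lies inside $E(X)$, which is countable by hypothesis; and by Proposition \ref{p51n}, $M:=\overline{G(e)}$ is a minimal set of $G$. Theorem \ref{tr37} then leaves three alternatives for $M$: a finite orbit (which is what I want), a Cantor set, or a circle. The heart of the proof is ruling out the latter two.

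Ruling out the circle case is geometric: any point of a circle in $X$ has a neighborhood in $X$ containing an arc passing through it, so no point of a circle in $X$ can be an endpoint of $X$; since $e\in M$, $M$ cannot be a circle. For the Cantor case I split on whether $M$ meets the invariant graph $Y$ (the minimal graph of $X$ containing every circle). If $M\cap Y\neq\emptyset$, minimality forces $M\subset Y$; the sub-case $Y=S^1$ is excluded by the same endpoint-vs-circle obstruction applied to $e\in M\subset Y$, so $Y$ is a graph different from a circle and Theorem \ref{tr1} forces $M$ finite, contradicting the Cantor assumption. If instead $M\cap Y=\emptyset$, I collapse $Y$ to the point $\gamma$ and pass to the dendrite $\widehat X$ with induced action $\widehat G$; since $\pi$ is a homeomorphism from $X\setminus Y$ onto $\widehat X\setminus\{\gamma\}$, one has $E(\widehat X)\setminus\{\gamma\}=E(X)\cap(X\setminus Y)$, which is countable, so $E(\widehat X)$ is countable as well. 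As $M\subset X\setminus Y$ is identified with a minimal set of $\widehat G$ in $\widehat X$ (exactly as in the proof of Theorem \ref{tr37}), Theorem \ref{tr33}(i) applies and forces $M$ to be a finite orbit — another contradiction with the Cantor hypothesis.

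The main obstacle I expect is the bookkeeping in the Cantor sub-case when $M\cap Y=\emptyset$: one must check that $M$, viewed in $\widehat X$ via $\pi$, really is a minimal set for $\widehat G$, and that countability of $E(X)$ genuinely transfers to countability of $E(\widehat X)$. Both are technical but follow from the fact that $\pi|_{X\setminus Y}$ is a homeomorphism and that $Y$ is globally $\widehat G$-invariant (collapsing to the fixed point $\gamma$). Once these routine checks are in place, Theorem \ref{tr33}(i) delivers the contradiction, and the circle obstruction together with Theorem \ref{tr1} handle the remaining sub-case, completing $(1)\Rightarrow(2)$.
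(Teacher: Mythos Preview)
Your argument is correct in substance and rests on the same two tools as the paper (the collapse $X\to\widehat X$ together with Theorem~\ref{tr33}(i), and Theorem~\ref{t56}(2)), but it is arranged less economically and has one small oversight.

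The oversight: Theorem~\ref{tr37} is stated only for local dendrites \emph{different from a dendrite}, so your invocation of it is not licensed when $X$ itself is a dendrite (and then $Y$ is not even defined). In that case Theorem~\ref{tr33}(i) applied directly to $X$ gives $(1)\Rightarrow(2)$ in one line, so the gap is trivial to close; but you should say so.

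On the organisation: you first invoke Theorem~\ref{t56}(2) to dispose of all non-endpoints, and then for each endpoint $e$ you pass through the trichotomy of Theorem~\ref{tr37} and eliminate the circle and Cantor alternatives, the Cantor case itself requiring the collapse to $\widehat X$ and Theorem~\ref{tr33}(i). The paper's proof is shorter: it collapses $Y$ once, obtains a dendrite $\widehat X$ with countable endpoint set, and applies Theorem~\ref{tr33}(i) to conclude that \emph{every} point of $X\setminus Y$ (endpoint or not) has finite $G$-orbit; then, since no point of $Y$ is an endpoint of $X$, Theorem~\ref{t56}(2) finishes off the points of $Y$. This avoids Theorem~\ref{tr37} and the attendant case analysis entirely. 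Incidentally, the same observation (endpoints of $X$ never lie in $Y$) would let you discard your sub-case $M\cap Y\neq\emptyset$ immediately, rather than splitting further on whether $Y$ is a circle.
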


 \begin{proof} $(2)\Rightarrow (1)$: it is obvious. $(1)\Rightarrow (2)$: Assume that ($G, X$) is pointwise almost periodic.
 As in the proof of Theorem \ref{t56}, we collapse the graph $Y$ to a point $\gamma$ to get a dendrite $D$ with countable set
 of endpoints, and a new action on $D$ having $\gamma$ as a global fixed point and all other points are almost periodic.
 By Theorem \ref{tr33}, every point in $D$ has a finite orbit and so are points in $X\setminus Y$ under the action of $G$.
 Any point in the graph $Y$ is a non-endpoint, hence by Theorem \ref{t56}, all points in $X$ have finite orbits
 under the action of $G$.
 \end{proof}
\medskip

\textbf{Remark 2.} \begin{enumerate}
\item  Corollary \ref{c63} extends Theorem 1.2 in \cite{ha2}. Notice that if $X$ is
a graph different from a circle, Hattab \cite{ha2} proved precisely that
($G, X$) is pointwise almost periodic if and only if $G$ is finite.

\item If $X$ is a circle, it is clear that if $G$ is the group generated by an irrational rotation, then ($G, X$) is minimal
but not  pointwise periodic.

\item If $E(X)$ is uncountable, Corollary \ref{c63} is not true in general: In \cite{ef},
Efremova and Makhrova construct an example of a homeomorphism $f$ on a dendrite $X$ such that
$E(X)$ is a Cantor set, where every non-endpoint of $X$ is periodic and every point in $E(X)$ is almost periodic, but not periodic.
 \end{enumerate}
 \bigskip

We denote by $X/G$ the orbit space of this action
consisting of all of the $G$-orbits (i.e., the quotient space of $X$ by the equivalence relation
whose classes are the orbits of $G$). Since the orbit space $X/G$ is in general complicated, one can
study a natural simpler one, called \textit{the orbit class space} of $(X, G)$,
denoted by $X/\widetilde{G}$
and consisting of all of the orbit classes: two points of $X$ belong to the same orbit
class if the closures of their orbits are the same.

For an orbit $O$ of $G$, we call the \textit{class of }$O$ the union $\textrm{cl}(O)$ of all
orbits of $G$ having the same closure as $O$.
%We denote by $X/\widetilde{G}$ the space
%of classes of orbits, called the orbit class space.
The two spaces are very linked together, since $X/G$ can be mapped onto $X/\widetilde{G}$
by the map $f$ which assigns to each $G$-orbit $O$ its class cl$(O)$. This map $f$ is an onto
quasi-homeomorphism (i.e. the map which assigns to each open set $V\subset X/\widetilde{G}$
the open set $f^{-1}(V)\subset X/G$ is a bijection, see \cite{gd}).

\begin{thm}\label{t56n} Let ($G, X$) be a group action, where $X$ is a local dendrite and $G$ is a group.
Then the following properties are equivalent:
\begin{itemize}
 \item[(1)] ($G, X$) is pointwise almost periodic;
  \item[(2)] $X/\widetilde{G}$ is Hausdorff.
\end{itemize}
 \end{thm}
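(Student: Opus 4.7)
The plan is to reduce Theorem \ref{t56n} to Theorem \ref{t56} via the classical fact that, in a compact Hausdorff space, a quotient by a closed equivalence relation is Hausdorff. The bridge is to identify, under the assumption of pointwise almost periodicity, the orbit-class space $X/\widetilde{G}$ with the quotient $X/R$ by the orbit-closure relation $R=\{(x,y)\in X\times X : y\in\overline{G(x)}\}$ that already appears in Theorem \ref{t56}.

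For $(1)\Rightarrow(2)$, I would first verify that pointwise almost periodicity forces $R$ to be an equivalence relation coinciding with orbit-class equivalence. Indeed, if $y\in\overline{G(x)}$ then $\overline{G(y)}$ is a non-empty closed $G$-invariant subset of the minimal set $\overline{G(x)}$ (Proposition \ref{p51n}), so $\overline{G(y)}=\overline{G(x)}$, giving symmetry; transitivity is immediate. Hence $x$ and $y$ are $R$-related iff they lie in the same orbit class, and $X/\widetilde{G}=X/R$ as topological quotients. By Theorem \ref{t56}, $R$ is closed in $X\times X$, and the general topology fact above yields Hausdorffness of $X/\widetilde{G}$.

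For $(2)\Rightarrow(1)$, let $q\colon X\to X/\widetilde{G}$ be the (continuous) quotient map. Given $x\in X$ and $y\in\overline{G(x)}$, first-countability of $X$ provides a sequence $g_n\in G$ with $g_n(x)\to y$. Since $g_n(x)$ and $x$ lie in the same $G$-orbit (and hence in the same orbit class), $q(g_n(x))=q(x)$ for every $n$, while continuity of $q$ forces this constant sequence to converge to $q(y)$. Uniqueness of limits in the Hausdorff space $X/\widetilde{G}$ gives $q(x)=q(y)$, i.e.\ $\overline{G(x)}=\overline{G(y)}$. Thus every point of $\overline{G(x)}$ has the same orbit closure, so $\overline{G(x)}$ is minimal and $x$ is almost periodic by Proposition \ref{p51n}.

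The main obstacle I anticipate is the topological input used in $(1)\Rightarrow(2)$: that for a compact Hausdorff space $X$ and a closed equivalence relation $R\subset X\times X$, the quotient $X/R$ is Hausdorff. A careful write-up should spell out that $R$-saturations of closed sets remain closed (using compactness of $X$ and closedness of $R$), so that $q$ is a closed map, and that distinct orbit classes can then be separated by disjoint saturated open neighbourhoods. Everything beyond this point is direct consequence of Theorem \ref{t56} and the definition of the orbit-class topology, and no additional dendrite-specific machinery is required.
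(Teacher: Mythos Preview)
Your proposal is correct and follows essentially the same route as the paper. For $(1)\Rightarrow(2)$ both you and the paper identify $R$ with the orbit-class equivalence under pointwise almost periodicity, invoke Theorem \ref{t56} for closedness of $R$, and deduce Hausdorffness of the quotient from compactness of $X$; for $(2)\Rightarrow(1)$ the paper argues that Hausdorffness makes each orbit class $\mathrm{cl}(O)$ closed, whence $\mathrm{cl}(O)=\overline{O}$ and $\overline{O}$ is minimal, while you reach the same conclusion via uniqueness of sequential limits in $X/\widetilde{G}$ --- a cosmetic difference only.
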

\medskip

\begin{proof}  $(1)\Rightarrow (2)$: Assume that ($G, X$) is pointwise almost periodic. Then $R = \Gamma_{\widetilde{G}}$, where  $\Gamma_{\widetilde{G}}$
is the graph of the relation $\widetilde{G}$. By Theorem \ref{t56}, $\Gamma_{\widetilde{G}}$ is closed in $X\times X$.
We conclude that $X/\widetilde{G}$ is Hausdorff, since $X$ is compact. $(2)\Rightarrow (1)$: Assume that $X/\widetilde{G}$ is Hausdorff, then for every $G$-orbit $O$, its class
cl$(O)$ is closed in $X$. Hence cl$(O) = \overline{O}$ since $O\subset cl(O)\subset \overline{O}$. It follows that $\overline{O}$ is a minimal set. Therefore assertion (1) follows.
\end{proof}
\medskip

For a single homeomorphism, Jmel \cite{j} (resp. Hattab and Salhi
\cite{ha2}) has shown that if $f$ is a pointwise periodic homeomorphism of $X$ which is a dendrite (resp. a graph), then the orbit space $X/f$ is Hausdorff.
 We extend these results to a group action on a local dendrite as follows:
\medskip

\begin{thm}\label{t59} Let $($G, X$)$ be a group action, where $X$ is a local dendrite and $G$ is a
group. Then we have the following properties:
\begin{enumerate}
 \item  If ($G, X$) is pointwise periodic, then $X/G$ is Hausdorff.

 \item If $X/G$ is Hausdorff, then every non-endpoint of $X$ has finite orbit or $X$ is a circle which is a $G$-orbit.
\end{enumerate}
\end{thm}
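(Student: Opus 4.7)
The plan is to reduce both parts to Theorem~\ref{t56}(1), using the standard fact that for a discrete group acting by homeomorphisms on a compact metric space, the orbit map $X\to X/G$ is open, hence $X/G$ is Hausdorff if and only if the orbit equivalence relation $R_{G}=\{(x,y)\in X\times X:y\in G(x)\}$ is closed in $X\times X$.

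For part (1), I would observe that pointwise periodicity forces every orbit $G(x)$ to be finite and hence closed, so $G(x)=\overline{G(x)}$ is automatically a minimal set; in particular $(G,X)$ is pointwise almost periodic. Theorem~\ref{t56}(1) then gives that the orbit closure relation $R$ is closed, and since each orbit equals its closure we have $R=R_{G}$, so $R_{G}$ is closed and $X/G$ is Hausdorff.

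For part (2), assume $X/G$ is Hausdorff. By the same topological fact, $R_{G}$ is closed, so every orbit is closed and $R_{G}=R$; Theorem~\ref{t56}(1) then yields pointwise almost periodicity of $(G,X)$. If $X$ is not a circle, Theorem~\ref{t56}(2) immediately delivers the desired conclusion. Otherwise $X=S^{1}$, and since $S^{1}$ has no endpoints, what must be shown is that either every orbit is finite or some orbit equals all of $S^{1}$. I would apply Corollary~\ref{c33}: every closed orbit is a minimal set, hence a union of at most two finite $G^{+}$-orbits, all of $S^{1}$, or a Cantor set. If some orbit equals $S^{1}$ we are done; otherwise I would rule out Cantor-set orbits by contradiction. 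Supposing $G(x_{0})=C$ were a Cantor set, I would pick $y\in S^{1}\setminus C$; by the uniqueness clause of Corollary~\ref{c33}(iii), $C$ lies in the closure of every orbit, hence in the closed orbit $G(y)$, so the minimal set $G(y)$ properly contains $C$. It cannot be finite (it contains an infinite set), nor another Cantor set (by uniqueness $G(y)=C$, contradicting $y\notin C$), and if $G(y)=S^{1}$ then $x_{0}\in G(y)$ would force $G(x_{0})=G(y)=S^{1}\neq C$, again a contradiction. This rules out Cantor-set orbits and forces every orbit to be finite.

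The main obstacle I expect is precisely the circle subcase of part (2): neither Theorem~\ref{t56}(2) nor the dendrite arguments of Section~3 apply there, so the proof must route through the trichotomy of Corollary~\ref{c33} and exploit the uniqueness of the minimal Cantor set together with the closedness of all orbits forced by $X/G$ being Hausdorff.
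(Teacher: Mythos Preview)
Your argument is correct. Part (1) is essentially the paper's proof: the paper routes through Theorem~\ref{t56n} (which in turn rests on Theorem~\ref{t56}(1)), while you invoke Theorem~\ref{t56}(1) directly together with the openness of the orbit map; these are the same idea phrased slightly differently.

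For part (2) the route genuinely diverges. You first observe that $X/G$ Hausdorff forces all orbits to be closed, hence $R_{G}=R$ is closed, and then feed this into Theorem~\ref{t56}: part (1) gives pointwise almost periodicity, and for $X$ not a circle part (2) immediately yields that every non-endpoint has finite orbit. The paper instead, after noting that all orbits are minimal, re-derives the non-circle case from scratch: it treats the interval, then uses the countability of $B(X)$ to find a finite orbit, invokes \cite[Theorem 5.4]{MN1} for dendrites, and for general local dendrites collapses the invariant graph $Y$ to a point and reduces to the dendrite case. Your reduction is shorter and avoids re-proving material already packaged in Theorem~\ref{t56}(2); the paper's approach is more self-contained but duplicates effort. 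The circle subcase is handled the same way in both: closed orbits are minimal, Corollary~\ref{c33} classifies them, and the uniqueness clause for the Cantor minimal rules that option out (your case analysis there can in fact be shortened by noting that a minimal set cannot properly contain another minimal set, so $C\subset G(y)$ with $y\notin C$ is already a contradiction).
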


\begin{proof} Assertion $(1)$: Assume that ($G, X$) is pointwise periodic,
then $X/G = X/\widetilde{G}$. Hence by Theorem \ref{t56n},
 $X/G$ is Hausdorff. Assertion (2): Assume that  $X/G$ is Hausdorff. Then all $G$-orbits are closed in $X$ and hence minimal sets.
 Assume that $X$ is a circle. By Corollary \ref{c33}, each orbit is finite or each orbit is $X$. If $X$ is an interval,
 then it is easy to see that every point has finite orbit. So suppose that $X$ is neither circle nor interval,
 then its set of branch points $B(X)$ of $X$ is nonempty. Let $x\in B(X)$. Then $Gx\subset B(X)$ by Lemma \ref{g1}.
 Since $B(X)$ is countable, $Gx$ is finite. If $X$ is a dendrite, then by (\cite{MN1}, Theorem 5.4), every
 non-endpoint of $X$ has finite orbit. Now assume that $X$ is a local dendrite not a dendrite (that is $Y$ is non empty), then by collapsing $Y$ to a point $\gamma$, we get a dendrite $D$ and a new action having $\gamma$ as a global fixed point and such that any other point has minimal orbit. Hence, every non-endpoint in $D$ has finite orbit and so the same holds under the action of $G$ on $X$ for any point $x$ outside the graph $Y$. Since $B(X)\cap Y\neq\
\emptyset$, there is a point in $Y$ with finite orbit, it follows that the same holds for any point in $Y$.
 \end{proof}
\medskip

\begin{cor}\label{c58} Let $($G, X$)$ be a group action, where $X$ is a local dendrite and $G$ is a countable
group. Then the following properties are equivalent:
\begin{itemize}
 \item [(1)] ($G, X$) is pointwise periodic;
\item [(2)] $X/G$ is Hausdorff.
\end{itemize}
\end{cor}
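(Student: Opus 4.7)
The plan is to note that direction $(1) \Rightarrow (2)$ is already contained in Theorem~\ref{t59}(1), so only the converse $(2) \Rightarrow (1)$ needs attention.

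Assume $X/G$ is Hausdorff. First I would observe that every $G$-orbit is then closed in $X$, so $G(x) = \overline{G(x)}$ for every $x \in X$. Consequently two orbits with equal closure must be equal, and the orbit class of any $x$ reduces to its $G$-orbit. Thus $X/\widetilde{G} = X/G$ is Hausdorff, and Theorem~\ref{t56n} applies and yields that $(G,X)$ is pointwise almost periodic. By Proposition~\ref{p51n}, this means $G(x) = \overline{G(x)}$ is a minimal set of $G$ for every $x \in X$.

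Now the countability of $G$ enters: each orbit, and hence each minimal set of $G$, is at most countable. If $X$ is not a dendrite, Theorem~\ref{tr37} classifies $G(x)$ as a finite orbit, a Cantor set, or a circle, and only the first option is countable. If $X$ is a dendrite, Lemma~\ref{l21} leaves the options of a finite orbit, all of $X$, or a perfect compact nowhere dense subset; since a non-empty perfect compact metric space is uncountable, and any non-degenerate continuum is uncountable, the latter two possibilities contradict countability (the degenerate case $X$ reduced to a single point being trivial). Therefore each $G(x)$ is finite, i.e.\ $(G,X)$ is pointwise periodic.

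The main (and essentially only) subtlety is the identification of $X/G$ with $X/\widetilde{G}$ in the presence of closed orbits, which legitimises the appeal to Theorem~\ref{t56n}; once this is in place, the countability of $G$ combined with the classification of minimal sets (Lemma~\ref{l21} in the dendrite case and Theorem~\ref{tr37} in the remaining case) forces finiteness of every orbit.
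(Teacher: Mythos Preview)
Your argument is correct and follows the same overall strategy as the paper: deduce from the Hausdorff property that every orbit is closed and hence minimal, then use countability of $G$ together with the structure of minimal sets to force finiteness. There are, however, two unnecessary detours. First, once you know $G(x)=\overline{G(x)}$, minimality is immediate (any nonempty closed invariant subset of $G(x)$ contains some $G(y)=G(x)$), so the passage through Theorem~\ref{t56n} and Proposition~\ref{p51n} is not needed; the paper simply writes ``all $G$-orbits are closed in $X$ (hence minimal sets)''. Second, your case split dendrite/non-dendrite is superfluous: Lemma~\ref{l21} applies to any continuum and already rules out the perfect and the $X$ alternatives by countability, which is exactly how the paper concludes in one line.
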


\begin{proof} $(1)\Rightarrow (2)$: it is already done by Theorem \ref{t59}. $(2)\Rightarrow (1)$: Assume that $X/G$ is Hausdorff. Then all $G$-orbits are closed in $X$ (hence minimal sets) and
so $X/G = X/\widetilde{G}$. Since $G$ is countable, so by Lemma \ref{l21}, all $G$-orbits are finite;
equivalently ($G, X$) is pointwise periodic.
\end{proof}
\bigskip

%\textbf{Remark 3.} \textbf{***} If $G$ is uncountable, Corollary \ref{c58} is not true in general;

\textit{Acknowledgements.} This work was supported by the research unit:
``Dynamical systems and their applications'' (UR17ES21),  of Higher Education and Scientific Research,
Tunisia.

\bibliographystyle{amsplain}

\begin{thebibliography}{9}
 \bibitem{Ah} H. Abdelli,  \emph{ $\omega$-Limit sets for monotone local dendrite maps} Chaos, Solitons Fractals
\textbf{71} (2015), 66--72.
 \bibitem{am} H. Abdelli, H. Marzougui, \emph{ Invariant sets for monotone local dendrite maps},
Internat. J. Bifur. Chaos Appl. Sci. Engrg., \textbf{26} (2016), 1650150.
%\bibitem{AaD} J.M. Aarts, F.G.M. Daalderop, \emph{Chaotic homeomorphisms on manifolds},
%Topology. Appl., \textbf{9} (1999), 93--96.
\bibitem{Ar} D. Arevalo,  W. J. Charatonik, P. P. Covarrubias, L. Simon, \emph{Dendrites with a closed set of endpoints},
Topology Appl., \textbf{115} (2001) 1--17.
\bibitem{IG} G. Askri, I. Naghmouchi, \emph{Topological size of scrambled sets for local dendrite maps},
Topology Appl., \textbf{164} (2014), 95--104.
%\bibitem{Aus1} J. Auslander, Minimal Flows and Their Extensions. North Holland, Amsterdam (1988).
\bibitem{Aus} J. Auslander, E. Glasner and B. Weiss, \emph{On recurrence in zero dimensional flows}, Forum Math. \textbf{19}
(2007), 107--114.
%\bibitem{Bal} F. Balibrea, R. Hric, L. Snoha, \emph{Minimal sets on graphs and dendrites},
%Internat. J. Bifur. Chaos Appl. Sci. Engrg.,
%\textbf{13}, (2003), 1721--1725.
 \bibitem{Baln} F. Balibrea; T. Downarowicz; R. Hric; L. Snoha and V. Spitalsky, \emph{Almost totally disconnected minimal systems},
 Ergodic Theory Dynam. Systems, \textbf{29} (2009), 737--766.
\bibitem{Bek} L.A. Beklaryan, \emph{On analogues of the Tits alternative for groups of homeomorphisms of the circle and the line}, Math. Notes, \textbf{71} (2002), 305--315.
%\bibitem{Bl} A.M. Blokh, \emph{Pointwise-recurrent maps on uniquely
% arcwise connected locally arcwise connected spaces}, Proc. Amer. Math. Soc., \textbf{143} (2015), 3985--4000.
 \bibitem{bu} K. Bursuk, S. Ulam, \emph{On symmetric products of topo­logical spaces}, Bull. Amer. Math. Soc.,
 \textbf{37} (1931), 875--882.
 \bibitem{mon} B. Duchesne and N. Monod, \emph{Structural properties of dendrite groups}, Trans. Amer. Math. Soc., to appear. 
 DOI: https://doi.org/10.1090/tran/7347. 
 \bibitem{ef} L.S. Efremova, E.N. Makhrova, \emph{The dynamics of monotone maps of dendrites}, Sb.
Math. \textbf{192} (2001), 807-821.
\bibitem{go} W. H. Gottschalk, \emph{Almost periodic points with respect to transformation semi-groups},
 Annals of Math., \textbf{47 }(1946), 762--766.
 \bibitem{gd} A. Grothendik, J. Dieudonn\'e, El\'ement de g\'eom\'etrie alg\'ebrique,
 Springer-verlag (1971).
%\bibitem{ha} H. Hattab, \emph{Pointwise recurrent one-dimensional flows}, Dyn. Syst., \textbf{26} (2011), 77--83.
\bibitem{ha2} H. Hattab, E. Salhi, \emph{Homeomorphisms of locally finite graphs}, Qual. Theory Dyn. Syst. \textbf{15}(2) (2016), 481--490.
%\bibitem{ha3} H. Hattab, \emph{Flows of locally finite graphs}, Qual. Theory Dyn. Syst. \textbf{} (2016).
\bibitem{Illanes99} A. Illanes, S.B. Nadler,  Hyperspaces; Fundamentals and Recent Advances, Monogr.
Pure Appl. Math, \textbf{216}, (Marcel Dekker, New York). (1999).
\bibitem{j} A. Jmel, \emph{Pointwise periodic homeomorphisms on dendrites}, Dyn. Syst., \textbf{30} (2015), 34--44.
\bibitem{Kur} K. Kuratowski, Topology, vol. II, Academic Press, New York, 1968.
%\bibitem{Ma1} J.H. Mai, \emph{Pointwise-recurrent graph maps}, Ergodic Theory Dynam. Systems, \textbf{25} (2005), 629--637.
\bibitem{Ma3} J.H. Mai, E.H. Shi, \emph{The nonexistence of expansive commutative group actions on Peano continua having free
dendrites}, Topology Appl., \textbf{155} (2007), 33--38.
\bibitem{MN1} H. Marzougui, I. Naghmouchi,  \emph{Minimal sets for group actions on dendrites}, Proc. Amer. Math. Soc., \textbf{144} (2016), 4413--4425.
%\bibitem{mw} D. McMahon, T. S. Wu, \emph{On the connectedness of homomorphisms in topological dy-
%namics}, Trans. Amer. Math. Soc. \textbf{217} (1976), 257--270.
\bibitem{Nadler} S. B. Nadler, Continuum Theory: An Introduction, (Monographs and Textbooks in Pure and Applied Mathematics, 158).
Marcel Dekker, Inc., New York, 1992.
\bibitem{Nagh1} I. Naghmouchi, \emph{Dynamics of monotone graph, dendrite and dendroid maps}, Internat. J. Bifur. Chaos Appl. Sci. Engrg., \textbf{21}, (2011), 1--11.
\bibitem{Nagh2} I. Naghmouchi, \emph{Pointwise-recurrent dendrite maps}, Ergodic Theory Dynam. Systems, \textbf{33} (2013), 1115--1123.
\bibitem{Ra} J.G. Ratcliffe, Foundations of Hyperbolic Manifolds, Grad. Texts in Math., vol. 149, Springer, 2006.
\bibitem{Sh1} E.H. Shi, S.H. Wang, L.Z. Zhou, \emph{Minimal group actions on dendrites}, Proc. Amer. Math. Soc.,
\textbf{138} (2010), 217--223.
\bibitem{Sh2} E.H. Shi, B.Y. Sun,  \emph{Fixed point properties of nilpotent group actions on $1$-arcwise connected continua,} Proc. Amer. Math. Soc., \textbf{137} (2009), 771--775.
\bibitem{Sh3} E.H. Shi, L. Zhou, \emph{Periodic point of solvable group actions on $1$-arcwise connected continua,} Topology Appl., \textbf{157} (2010), 1163--1167.
\end{thebibliography}

\end{document}